\documentclass[12pt]{article}
\textwidth = 6.5 in \textheight = 9 in \oddsidemargin = 0 cm
\evensidemargin = 0  cm \topmargin = -2 cm
\parskip = 2 mm
\parindent = 2 mm

\RequirePackage{ifpdf}
\RequirePackage{graphicx}

\newtheorem{theorem}{Theorem}
\newtheorem{example}{Example}

\newtheorem{lemma}{Lemma}

\newtheorem{remark}{Remark}

\newtheorem{question}{Question}
\newtheorem{notation}{Notation}
\newenvironment{proof}{{\bf Proof.}}{\hspace*{1mm}\hfill\rule{2mm}{2mm}}

\newtheorem{pretheorema}{{\bf Theorem}}

\newtheorem{prelemmab}{{\bf Lemma}}

\newtheorem{prepropositionc}{{\bf Proposition}}

\input amssym.tex
 \title{Silver block intersection graphs of Steiner $2$-designs}
 \author{{\sc A. Ahadi}\thanks{Department of Mathematical Sciences, Sharif University of Technology,
P. O. Box 11155-9415, Tehran, I. R. Iran  arash\_ahadi5@yahoo.com},
{\sc Nazli  Besharati}\thanks{Department of Mathematical Sciences,  Payame Noor University, P.O. Box 19395-3697,
Tehran, I. R. Iran n\_besharati@yahoo.com},\\
{\sc E.S. Mahmoodian}\thanks{Department of Mathematical Sciences, Sharif University of Technology,
P. O. Box 11155-9415, Tehran, I. R. Iran emahmood@sharif.edu, Corresponding author},
{\sc M. Mortezaeefar}\thanks{Department of Mathematical Sciences, Sharif University of Technology,
P. O. Box 11155-9415, Tehran, I. R. Iran m.mortezaeefar@gmail.com}}

\date{}

\begin{document}
\maketitle

%
\begin{abstract}
For a block design $\cal{D}$, a series of {\sf block intersection
graphs} $G_i$, or $i$-{\rm BIG}($\cal{D}$), $i=0, \dots, k$ is
defined in which the vertices are the blocks of $\cal{D}$, with
two vertices adjacent if and only if the corresponding blocks
intersect in exactly $i$ elements.  A silver graph $G$ is defined
with respect to a maximum independent set of $G$, called an {\sf
$\alpha$-set}. Let $G$ be an $r$-regular graph and $c$ be a
proper $(r + 1)$-coloring of  $G$. A vertex $x$ in $G$ is said to
be {\sf rainbow} with respect to $c$ if every color appears in
the closed neighborhood $N[x] = N(x) \cup \{x\}$. Given an
$\alpha$-set $I$ of $G$, a coloring $c$ is said to be silver with
respect to $I$ if every $x\in I$ is rainbow with respect to $c$.
We say $G$ is {\sf silver} if it admits a silver coloring with
respect to some $I$. Finding silver graphs is of interest,
for a motivation and progress in silver
graphs see~\cite{MR2452764}~and~\cite{MR1740669}.
We investigate conditions for $0$-{\rm BIG}($\cal{D}$) and
$1$-{\rm BIG}($\cal{D}$) of Steiner  $2$-designs
${\cal{D}}=S(2,k,v)$ to be silver.

{\bf keywords: }{Silver coloring,   Block intersection graph, 
Steiner  $2$-design, and Steiner triple system}\\

{\textbf{Subject class:} }{ 05C15, 05B05, 05B07,  and 05C69}
\end{abstract}
\section{Introduction and preliminaries}
We follow standard notations and concepts from design theory. For
these, one may refer to, for example,~\cite{MR2246267}
and~\cite{MR1871828}.

A {\sf $2$-$(v,k,\lambda)$ design} $(2 < k < v)$
   is a pair $(V,\cal{B})$ where $V$ is a
$v$-set and $\cal{B}$ is a collection of $b$ \  $k$-subsets of
$V$ (blocks) such that any $2$-subset of $V$ is contained in
exactly $\lambda$ blocks.
A  $2$-$(v,k,1)$ design is called {\sf Steiner $2$-design} and
is denoted by $S(2,k,v)$.
An $S(2, 3, v)$ is a {\sf Steiner triple system} or  STS$(v)$.
A design with $b = v$  is a {\sf
symmetric $(v, k, \lambda)$-design}.
A symmetric $S(2, k, v)$ is called a {\sf projective plane}. If
$k$ is the size of the blocks then $n:=k-1$ is called the order
of the plane. This design is usually denoted by {\rm PG}$(2,n)$.
 A $2$-$(n^2,n,1)$ design is called an {\sf affine plane}.
For such design we use the notation {\rm AG}$(2,n)$.

A {\sf partial parallel class} is a set of blocks that contains
no element of the design more than once. A {\sf parallel class}
(PC) or a {\sf resolution class} in a design is a set of blocks
that partition the set of elements $V.$ A {\sf near parallel
class} is a partial parallel class missing a single element. A
{\sf resolvable balanced incomplete block design} is a $2$-$(v,
k,\lambda)$ design whose blocks can be partitioned into parallel
classes. The notation RBIBD$(v,k,\lambda)$ is commonly used.
An affine plane of order $n$ is an  RBIBD$(n^2,n,1)$.
A resolvable {\rm STS}$(v)$ together with a resolution of its blocks is
called a {\sf Kirkman triple system}, {\rm KTS}$(v)$.
%

Given a design $\cal{D}$, a series of {\sf block intersection
graphs} $G_i$, or $i$-{\rm BIG}, $i=0, \dots, k$ can be defined
in which the vertices are the blocks of $\cal{D}$, with two
vertices are adjacent if and only if the corresponding blocks
intersect in exactly $i$ elements.
\begin{example}
\label{STS7STS9} For {\rm STS}$(7)$, $0$-{\rm BIG} is empty graph
and $1$-{\rm BIG} is $K_7$. For {\rm STS}$(9)$, $0$-{\rm BIG} is
disconnected and consists of four disjoint $K_3$'s and $1$-{\rm
BIG} is $K_{3,3,3,3}.$
\end{example}
The study of $i$-{\rm BIG}($\cal{D}$) is useful in characterizing
block designs.  Some researchers have studied properties of
various kinds of block intersection graphs, see for example
\cite{MR1140783}, \cite{MR1079733}, \cite{MR1170773},
\cite{MR1312454}, \cite{MR1198541},
\cite{MR1691408}, \cite{MR2071911}, and \cite{MR1675136}.

A graph  of order $v$ is { \sf strongly regular}, denoted by
${\rm SRG}(v, k, \lambda, \mu)$, whenever it is not complete or
edgeless and, (i) each vertex is adjacent to $k$ vertices, (ii)
for each pair of adjacent vertices there are $\lambda$  vertices
adjacent to both, (iii) for each pair of non-adjacent vertices
there are $\mu$  vertices adjacent to both.
\begin{remark}
\label{srg1} Let $G_i$ be the $i$-block intersection graph of an
$S(2,k,v)$. Then  for each $i=2,3, \dots, k$, the graph $G_i$  is
empty. So we consider only $G_0$ and $G_1$. Graphs $G_0$ and $G_1$
are complements of each other. $G_1$ is an ${\rm
SRG}(b,k(r-1),r-2+(k-1)^2,k^2)$ and $G_0$ is an ${\rm
SRG}(b,b-k(r-1)-1,b-2k(r-1)+ k^2-2, b-2kr+k^2+r-1)$  (see Chapter~21 of~\cite{MR1871828}).
\end{remark}
%
In a graph $G=(V,E)$ an {\sf independent set}  is a subset of
vertices  no two of which  are adjacent. The {\sf independence
number} $\alpha (G)$ is the cardinality of a largest set of
independent vertices. We refer to any maximum independent set of
a graph as an $\alpha$-{\sf set}. Let $c$ be a proper $(r +
1)$-coloring of an $r$-regular graph $G$. A vertex $x$ in $G$ is
said to be {\sf rainbow} with respect to $c$ if every color
appears in the closed neighborhood $N[x] = N(x)\cup \{x\}$.
%
Given an $\alpha$-set $I$ of $G$ the coloring $c$ is said to be
{\sf silver} with respect to $I$ if every $x\in I$ is rainbow with
respect to $c$. We say $G$ is silver if it admits a silver
coloring with respect to some $\alpha$-set. If all vertices of
$G$ are rainbow, then $c$ is called a {\sf totally silver}
coloring of $G$ and $G$ is said to be totally silver. Note that
the definition of silver coloring depends on the chosen
$\alpha$-set. For example in Figure~\ref{diagonal}, a graph $G$ is
shown which is silver when the $\alpha$-set (the bold vertices)
is taken as in the left, but it does not have any silver coloring
with the $\alpha$-set taken as on the right hand side.
\vspace*{-1mm}
\begin{figure}[ht]
\def\emline#1#2#3#4#5#6{%
\put(#1,#2){\special{em:moveto}}%
\put(#4,#5){\special{em:lineto}}}
\def\newpic#1{}
%
%
%
\unitlength 0.12mm
\special{em:linewidth 0.4pt}
\linethickness{0.4pt}
%
\begin{picture}(150,150)(-160,0)
%
\put(207,73){\circle*{12}} \put(185,125){\circle*{5}}
\put(133,147){\circle*{5}} \put(81,125){\circle*{12}}
\put(59,73){\circle*{5}} \put(81,20){\circle*{12}}
\put(133,-1){\circle*{5}} \put(186,20){\circle*{5}}
\emline{207}{73}{1}{185}{125}{2}
\emline{207}{73}{1}{59}{73}{2}
\emline{207}{73}{1}{186}{21}{2}
\emline{185}{125}{1}{133}{147}{2}
\emline{185}{125}{1}{186}{21}{2}
\emline{133}{147}{1}{81}{125}{2}
\emline{133}{147}{1}{81}{20}{2}
\emline{81}{125}{1}{59}{73}{2}
\emline{81}{125}{1}{133}{-1}{2}
\emline{59}{73}{1}{81}{20}{2}
\emline{81}{20}{1}{133}{-1}{2}
\emline{133}{-1}{1}{186}{21}{2}
\put(189,7){\makebox(0,0)[cc]{$4$}}
\put(133,-17){\makebox(0,0)[cc]{$1$}}
\put(81,4){\makebox(0,0)[cc]{$3$}}
\put(46,72){\makebox(0,0)[cc]{$2$}}
\put(79,142){\makebox(0,0)[cc]{$3$}}
\put(190,139){\makebox(0,0)[cc]{$3$}}
\put(220,72){\makebox(0,0)[cc]{$1$}}
\put(133,161){\makebox(0,0)[cc]{$4$}}
\end{picture}
%
\def\emline#1#2#3#4#5#6{%
\put(#1,#2){\special{em:moveto}}%
\put(#4,#5){\special{em:lineto}}}
\def\newpic#1{}
\unitlength 0.12mm
\special{em:linewidth 0.4pt}
\linethickness{0.4pt}
\begin{picture}(150,150)(-400,0)
%
\put(207,73){\circle*{12}} \put(185,125){\circle*{5}}
\put(133,147){\circle*{12}} \put(81,125){\circle*{5}}
\put(59,73){\circle*{5}} \put(81,20){\circle*{5}}
\put(133,-1){\circle*{12}} \put(186,20){\circle*{5}}
\emline{207}{73}{1}{185}{125}{2}
\emline{207}{73}{1}{59}{73}{2}
\emline{207}{73}{1}{186}{21}{2}
\emline{185}{125}{1}{133}{147}{2}
\emline{185}{125}{1}{186}{21}{2}
\emline{133}{147}{1}{81}{125}{2}
\emline{133}{147}{1}{81}{20}{2}
\emline{81}{125}{1}{59}{73}{2}
\emline{81}{125}{1}{133}{-1}{2}
\emline{59}{73}{1}{81}{20}{2}
\emline{81}{20}{1}{133}{-1}{2}
\emline{133}{-1}{1}{186}{21}{2}
\end{picture}
%
\caption{A silver coloring of a graph} \label{diagonal}
\end{figure}
%

There are many different version of rainbow colorings in the
literature, for example see \cite{AminiEsperetHeuvel}, \cite{MR1076016},
 \cite{KantLeeuwen}, and  \cite{MR2378044}.
For a
motivation and progress in silver graphs
see~\cite{MR2452764}~and~\cite{MR1740669}.
In fact silver graphs are closely related to a concept in graph
coloring, called defining set. Let $c$ be a proper $k$-coloring
of a graph $G$ and let $S\subseteq V (G)$. If $c$ is the only
extension of $c|_S$ to a proper $k$-coloring of $G$, then $S$ is called
a {\sf defining set of $c$}. The minimum size of a defining set
among all $k$-colorings of $G$ is called a {\sf defining number}
and denoted by {\rm def}$(G,k)$. A more general survey of
defining sets in combinatorics appears in \cite{MR2011736}. Let
$G$ be an $r$-regular graph, then $G$ is silver if and only if
{\rm def}$(G,r+1) =| V(G)|-\alpha(G)$. In~\cite{MR1740669} an
open problem is raised:

\begin{question}
\label{silver question}
Find classes of $r$-regular graphs $G$, for which {\rm
def}$(G,r+1) = | V(G)|-\alpha(G)$, i.e. determine
 classes of all silver graphs.
\end{question}

A silver cube is a silver graph $G=K_n^d$,
  the Cartesian power of the complete graph $K_n$. Silver cubes
  are
  generalizations of silver matrices, which are
 $n \times n$ matrices where each symbol in $\{1,2,\dots,2n-1\}$ appears in
 either the $i$-th row or the $i$-th column of the matrix. In~\cite{MR2452764}
 some algebraic constructions and a product construction of silver cubes are given.
 They  show the relation of these cubes to codes over
 finite fields,
 dominating sets of a graph, Latin squares, and finite geometry.
 In particular the Hamming codes are used to produce a totally silver cube
 and the bound for the best binary codes is used to prove the non-existence of
 silver cubes for a large class of parameters with $n=2.$

To study Question~\ref{silver question}, here we consider  $i$-{\rm BIGs} of designs. First we give some
examples of designs with silver $i$-{\rm BIGs}.
\begin{example}
\label{symmetric-AG}
In any symmetric $(v, k, \lambda)$-design ${\cal D}$, every two
distinct blocks have exactly $\lambda$ elements in common, so for
$ 0 \leq i \leq k$, $i \neq \lambda$, $i$-{\rm BIG}$(\cal{D})$ is
empty graph, and $\lambda$-{\rm BIG}$(\cal{D})$ is complete
graph. Hence all of these graphs are totally silver. Specifically
for each $k$ and $0 \leq i \leq k+1$, $i$-{\rm BIG}$(S(2,k+1,
k^2+k+1))$ is totally silver.

If $\cal{D}$ is an ${\rm AG}(2,n)$, then $G_0 = 0$-{\rm
BIG}$(\cal{D})$ consists of $(n+1)$ disjoint $K_n$'s, so it is
totally silver, and $G_1 = 1$-{\rm
BIG}${(\cal{D})}=K_{\underbrace{n,n,\ldots,n}_{n+1}}$,  is silver.
\end{example}
In this paper we prove the following results: If an $S(2,k,v)$
contains a parallel class, then  a necessary condition for
$1$-{\rm BIG({$S(2,k,v)$}}) to be  silver is $ k^2\mid v.$ For
each admissible $v=9m$ we construct a ${\cal{D}}_1$= {\rm
KTS}$(v)$,  such that $1$-{\rm BIG}$({\cal{D}}_1)$ is silver. And
in general for each $k$ and $v$ where an  ${\rm AG}(2,k)$ and an
{\rm RBIBD}$(v,k,1)$ exist we construct a ${\cal{D}}^*= {\rm
RBIBD}(kv,k,1)$  such that $1$-{\rm BIG}$({\cal{D}}^*)$ is silver.
Also a lower bound for $\alpha(G_1)$ is given in order for a
$1$-{\rm BIG({$S(2,k,v)$}}) to be  silver.
For any admissible $v$, the existence of a silver $1$-{\rm
BIG($S(2,k,v)$)} which possesses a  maximum possible independent
set, i.e. of size $\frac{v}{k}$ or $\frac{v-1}{k}$, is settled.
We prove that for $ v > k^3-2k^2+2k$ there is no silver $0$-{\rm
BIG({$S(2,k,v)$}}). Also we settle the question of existence of
silver $0$-{\rm BIG(STS$(v)$)} for all admissible $v$.

Since every vertex of $i$-{\rm BIG}$(\cal{D})$ corresponds to
a block of $\cal{D}$, we will mostly refer to them as ``blocks''
rather than vertices. The following notation will be used in our
discussion.
 Let $G$ be a graph and $I$ be an $\alpha$-set
of $G$. For each $ i= 1,\dots,|I|$, we let
\vspace*{-1.6mm}
$$X_i := \{u| u \in V(G) \setminus I, \ u \ {\rm is
\ adjacent \ to \ exactly}  \ i \  {\rm vertices  \ of}  \ I\}.$$

%

\section{One block intersection graphs}

The following is a necessary condition for $1$-{\rm
BIG}$({\cal{D}})$ of a Steiner system  ${\cal{D}}=S(2,k,v)$ with
$\alpha(G_1)= \frac{v}{k}$, to be silver.

\begin{theorem}
\label{forallk} Let $\cal{D}$ be an $S(2,k,v)$, which has a
parallel class, and let $G_1$ be $1$-${\rm BIG(\cal{D}})$. A
necessary condition for  $G_1$ to be  silver  is  $ k^2\mid v.$
\end{theorem}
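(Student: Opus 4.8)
The plan is to exploit the fact that an $\alpha$-set of $G_1$ is forced to be a parallel class, and then to run a color-by-color counting argument anchored on one structural observation about how the remaining blocks meet that class. First I would record the parameters. Since $\cal D$ has a parallel class, $k\mid v$ and there are $v/k$ pairwise disjoint blocks; as an independent set of $G_1$ is precisely a set of pairwise disjoint blocks, we get $\alpha(G_1)=v/k$, and every $\alpha$-set $I$ is a parallel class partitioning $V$. By the {\rm SRG} parameters in Remark~\ref{srg1}, $G_1$ is $k(r-1)$-regular with $r=\frac{v-1}{k-1}$, so a silver coloring uses $k(r-1)+1$ colors, and the rainbow condition on each $B\in I$ says that each of these colors occurs \emph{exactly once} in $N[B]$ (since $|N[B]|=k(r-1)+1$ equals the number of colors).

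The key structural step is to show that every block $w\notin I$ is adjacent to exactly $k$ blocks of $I$; in the notation of the excerpt this says $V(G_1)\setminus I=X_k$. Indeed, since $I$ partitions $V$, each of the $k$ points of $w$ lies in a unique block of $I$, and two points of $w$ cannot lie in a common block $B'\in I$, for then $w$ and $B'\neq w$ would share two points, contradicting $\lambda=1$. Hence the $k$ points of $w$ lie in $k$ distinct blocks of $I$, and $w$ meets each of them in exactly one point, so $\deg_I(w)=k$.

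With this in hand I fix a silver coloring $c$ and, for each color $j$, count the pairs $(B,w)$ with $B\in I$, $w\in N[B]$, and $c(w)=j$. The rainbow property gives exactly one such $w$ for each $B$, so the total is $|I|=v/k$. Counting instead by $w$: a block $w\in I$ of color $j$ contributes $1$ (only $B=w$, as $I$ is independent), while a block $w\notin I$ of color $j$ contributes $\deg_I(w)=k$. Writing $a_j$ and $m_j$ for the numbers of color-$j$ blocks inside and outside $I$, this yields $v/k=a_j+k\,m_j$, hence $\tfrac{v}{k}\equiv a_j\ (\mathrm{mod}\ k)$ for every color $j$. Finally, writing $v=r(k-1)+1$ one checks $k(r-1)+1-\tfrac{v}{k}=(r-1)\tfrac{k^2-k+1}{k}>0$ (using $r\ge 2$, which follows from $v>k$), so $|I|=v/k$ is strictly smaller than the number of colors. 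By pigeonhole some color $j_0$ is absent from $I$, i.e.\ $a_{j_0}=0$, whence $v/k=k\,m_{j_0}$ and therefore $k^2\mid v$.

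I expect the main obstacle to be the structural claim $\deg_I(w)=k$ for every outside block: once that is pinned down, the per-color identity $v/k=a_j+k\,m_j$ and the pigeonhole on colors are routine. I would also be careful to justify the strict inequality $|I|<k(r-1)+1$, since it is exactly what forces a missing color and hence a class with $a_{j_0}=0$; without a color absent from $I$ the congruences $\tfrac{v}{k}\equiv a_j\ (\mathrm{mod}\ k)$ alone would not deliver the divisibility.
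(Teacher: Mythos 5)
Your proposal is correct and follows essentially the same route as the paper's proof: since $|C|=\frac{k(v-k)}{k-1}+1>|I|=\frac{v}{k}$, some color is absent from $I$, each block colored with it meets exactly $k$ blocks of the parallel class $I$, and the rainbow condition forces these blocks to partition $I$, giving $k\mid\frac{v}{k}$. Your write-up merely makes explicit the details the paper leaves implicit (that every $\alpha$-set is a parallel class with $V(G_1)\setminus I=X_k$, the verification that $|C|>|I|$, and the per-color identity $\frac{v}{k}=a_j+k\,m_j$, of which only the case $a_{j_0}=0$ is needed).
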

\begin{proof}
 $G_1$ is a $\frac{k(v-k)}{(k-1)}$--regular graph.
Let $I$ be an $\alpha$-set, and assume that  $G_1$ has
a silver coloring  with respect to $I$ with $C$ as  the set of
colors. We have $|I|=\frac{v}{k}$, and  $|C|=
\frac{k(v-k)}{k-1}+1$. Since $|C| > |I|$, a color like $\iota$
exists that is not used in  $I$.
The vertices of $I$ are rainbow, and each vertex with color
$\iota$ from  $V(G_1) \setminus I$,  must be adjacent to $k$
distinct vertices of $I$. Therefore $|I|$ must be a multiple of
$k$, which implies $k^2\mid v$. \end{proof}
\begin{example}
\label{70sts} There are $80$ nonisomorphic ${\rm STS}(15)$s, where
$70$ of them have parallel class~{\rm(see~\cite{MR2246267}, page
32)}. So by Theorem~\ref{forallk}, none of those $70$ has silver
$G_1$.
\end{example}
By Theorem~\ref{forallk}, if $v$ is not a multiple of $9$, then no
silver $1$-${\rm BIG(KTS}(v))$  exists. In the next lemma we show
that for the case $9\mid v$, when a ${\rm KTS}(v)$ exists,  i.e.
$v=18q+9$, there exists a silver $1$-${\rm BIG(KTS}(v))$. This lemma is an
illustration of a general structure  which will be discussed in
Theorem~\ref{RBIBDfork}.

\begin{lemma}
\label{9midvKTS} If  $v \equiv 3 \pmod {6}$, then a ${\cal
K}={\rm KTS}(3v)$ exists such that $1$-${\rm BIG}({\cal K})$ is
silver.
\end{lemma}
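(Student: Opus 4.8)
The plan is to build $\cal K$ by tripling a smaller Kirkman system (this is the $k=3$ instance of the forthcoming Theorem~\ref{RBIBDfork}) and then to reduce silverness to a transparent colouring requirement dictated by the block structure of that smaller system. Since $v\equiv 3\pmod 6$, a ${\rm KTS}(v)$ exists; I would fix one on a point set $P$ together with its resolution into parallel classes $R_1,\dots,R_{(v-1)/2}$. Take $P\times\{0,1,2\}$ as the point set of $\cal K$ and, on each ``super-block'' $B\times\{0,1,2\}$ (where $B$ is a block of the ${\rm KTS}(v)$), impose an ${\rm AG}(2,3)$ but discard its parallel class of vertical fibres; instead adjoin the single global class $V=\{\,\{(p,0),(p,1),(p,2)\}:p\in P\,\}$. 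A short pair-count shows this is an $S(2,3,3v)$: each vertical pair is covered once by $V$, while a pair $\{(a,i),(b,j)\}$ with $a\neq b$ lies in the unique super-block over the block $B\ni a,b$, where the three retained parallel classes of ${\rm AG}(2,3)$ cover it exactly once. Resolvability is then routine: $V$ is one parallel class, each $R_\ell$ yields one ``horizontal'' (slope $0$) class, and each pair $(R_\ell,s)$ with $s\in\{1,2\}$ yields one ``diagonal'' class, giving $1+\frac{v-1}{2}+(v-1)=\frac{3v-1}{2}$ classes in all, so $\cal K$ is a ${\rm KTS}(3v)$.

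Next I would pin down the $\alpha$-set and the neighbourhoods in $G_1=1$-${\rm BIG}(\cal K)$. Independent sets of $G_1$ are exactly partial parallel classes, so $\alpha(G_1)=\frac{3v}{3}=v$ and $I:=V$ is an $\alpha$-set; by Remark~\ref{srg1}, $G_1$ is $d$-regular with $d=9(v-1)/2$. The decisive observation is that every non-fibre triple $T$ projects onto a full block of the underlying ${\rm KTS}(v)$, which I call $B_T$. Consequently the neighbours of a fibre $x_p=\{(p,0),(p,1),(p,2)\}$ are precisely the non-fibre triples with $p\in B_T$: for each of the $(v-1)/2$ blocks of the ${\rm KTS}(v)$ through $p$ these are exactly the nine triples lying over it, which recovers $|N(x_p)|=9\cdot\frac{v-1}{2}=d$.

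I would then reduce silverness to a colouring of the non-fibre triples alone. Two non-fibre triples lie in a common closed neighbourhood $N[x_p]$ if and only if $B_T\cap B_{T'}\neq\emptyset$, and adjacency of two non-fibre triples likewise forces $B_T\cap B_{T'}\neq\emptyset$; hence if I colour the non-fibre triples so that equally coloured triples come from \emph{disjoint} blocks, the colouring is automatically proper on them and injective on each $N(x_p)$. To realise this I assign to each $R_\ell$ a private palette of nine colours and, inside every block $B\in R_\ell$, colour its nine non-fibre triples bijectively with that palette; this uses $9\cdot\frac{v-1}{2}=d$ colours, and colouring all of $V$ with one further colour $\iota$ produces a proper $(d+1)$-colouring. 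Since $|N(x_p)|=d$ equals the number of non-fibre colours and the colouring is injective there, each $x_p$ sees all $d$ of them exactly once (one per palette, as $p$ meets each $R_\ell$ in a single block) together with its own colour $\iota$; thus every fibre is rainbow and $G_1$ is silver with respect to $I$.

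The main obstacle is this reduction rather than the arithmetic: one must notice that projecting a non-fibre triple to its originating ${\rm KTS}(v)$-block converts both ``being adjacent'' and ``sharing a fibre-neighbour'' into the single condition $B_T\cap B_{T'}\neq\emptyset$. Once that is in place, properness and injectivity on every $N(x_p)$ come for free, and surjectivity on each fibre-neighbourhood is forced by the equality $|N(x_p)|=d$, so no further case analysis is required.
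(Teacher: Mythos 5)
Your proposal is correct and is essentially the paper's own proof: both constructions triple a ${\rm KTS}(v)$ by laying an ${\rm AG}(2,3)$ minus its vertical parallel class over each block, adjoin the global fibre class as the $\alpha$-set, and your palette of nine colours per parallel class $R_\ell$ is exactly the paper's labelling $(m,\beta)$, $1\le m\le 9$, $1\le\beta\le\frac{v-1}{2}$, with one extra colour on the fibres. Your only addition is to spell out what the paper leaves as ``easy to check'': that same-coloured triples project to disjoint blocks (giving properness and injectivity on each $N(x_p)$) and that $|N(x_p)|=\frac{9(v-1)}{2}$ then forces every non-fibre colour to appear, so each fibre is rainbow.
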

\begin{proof}
Let ${\cal A}= {\rm AG}(2,3)={\rm STS}(9)$ with $V({\cal{A}})= \{
(i,j) \ | \ 1 \leq i,j \leq 3 \}$, and denote its parallel
classes by:
$$\begin{array}{ccc}
 \Theta_{0}    \\
\{(1,1),(2,1),(3,1)\}\\
\{(1,2),(2,2),(3,2)\}\\
\{(1,3),(2,3),(3,3)\}\\
 \end{array}$$
$$\begin{array}{ccccc}
 \begin{array}{c c c}
  \Theta_{1}  \\
a_1= \{(1,1),(1,2),(1,3)\}\\
a_2=\{(2,1),(2,2),(2,3)\}\\
a_3=\{(3,1),(3,2),(3,3)\}\\
\end{array}
& &
\begin{array}{ccc}
\Theta_{2} \\
a_4=\{(1,1),(2,2),(3,3)\}\\
a_5=\{(1,3),(2,1),(3,2)\}\\
a_6=\{(1,2),(2,3),(3,1)\} \\
\end{array}
& &
\begin{array}{ccc}
 \Theta_{3}  \\
a_7=\{(1,1),(2,3),(3,2)\}\\
a_8=\{(1,2),(2,1),(3,3)\}\\
a_9=\{(1,3),(2,2),(3,1)\}\\
\end{array}
\end{array}$$
\\

\noindent
Consider a ${\rm KTS}(v)$ ${\cal D} = (V, {\cal B})$, $V =
\{x_{1},x_{2},\ldots,x_{v}\}$  with parallel classes $\pi_1,\pi_2,
\dots, \pi_{\frac{v-1}{2}}$. Using its blocks we construct ${\cal
K}  = (V^{*}, {\cal B^{*}})$,  a  ${\rm KTS}(3v)$ in the
following manner.

The set of elements of ${\cal K}$ is $  V^{*} = \{1,2,3\} \times
V$, and the blocks are introduced in the following $4$ types of
parallel classes, $\Omega_{0,\beta}$, $\Omega_{1,\beta}$,
$\Omega_{2,\beta}$ and $\Omega_{3,\beta}$.
\begin{itemize}
\item $\Omega_{0,\beta}:$ \
 $\Big\{\{(1,x_{i}),(2,x_{i}),(3,x_{i})\}| \ 1\leq i \leq v\Big  \}$.
\end{itemize}
We denote every block of ${\cal {D}}$ by $\{x_i,x_j,x_k\}$, where
$i <j<k$. In the following a label $(m,\beta)$ for each block is
its color, the block with label $(m,\beta)$ is obtained by using
the block $ a_m$ of~${\cal A}$.
\begin{itemize}
\item $\Omega_{1,\beta}$: \
         $\Big\{\{(1,x_i),(1,x_j),(1,x_k)\}_{(1,\beta)}, \
         \{(2,x_i),(2,x_j),(2,x_k)\}_{(2,\beta)},\
          \{(3,x_i),(3,x_j), \\  \hspace*{19mm} (3,x_k)\}_{(3,\beta)}\mid
         \{x_i,x_j,x_k\} \in \pi_{\beta} \Big\}$,
         \ for $1\le \beta \le \frac{v-1}{2}$,
\item  $\Omega_{2,\beta}$: \
        $\Big \{\{(1,x_{i}),(2,x_{j}),(3,x_{k})\}_{(4,\beta)},
         \{(1,x_{k}),(2,x_{i}),(3,x_{j})\}_{(5,\beta)},
         \{(1,x_{j}),(2,x_{k}), \\  \hspace*{19mm} (3,x_{i})\}_{(6,\beta)}\mid
         \{x_i,x_j,x_k\} \in \pi_{\beta}$\Big\}, \ for $1 \le \beta \le \frac{v-1}{2}$,
\item $\Omega_{3,\beta}$: \
         $\Big\{\{(1,x_{i}),(2,x_{k}),(3,x_{j})\}_{(7,\beta)},
         \{(1,x_{j}),(2,x_{i}),(3,x_{k})\}_{(8,\beta)},
         \{(1,x_{k}),(2,x_{j}), \\  \hspace*{19mm} (3,x_{i})\}_{(9,\beta)}\mid
          \{x_i,x_j,x_k\} \in \pi_{\beta}$\Big\}, \ for $1 \le \beta \le \frac{v-1}{2}$.
\end{itemize}
%
%
Figures~\ref{type2}~and~\ref{type4}  demonstrate  the
$4$ types of blocks.
\begin{figure}[ht]
\def\emline#1#2#3#4#5#6{%
\put(#1,#2){\special{em:moveto}}%
\put(#4,#5){\special{em:lineto}}}
\def\newpic#1{}
%
%
%
\unitlength 0.23mm
\special{em:linewidth 0.4pt}
\linethickness{0.4pt}
\begin{picture}(150,150)(-75,0)
%
\put(-17,78){\circle*{3}}
\put(-17,35){\circle*{3}}
\put(-17,121){\circle*{3}}
\put(60,121){\circle*{3}}
\put(60,78){\circle*{3}}
\put(60,35){\circle*{3}}
\put(103,121){\circle*{3}}
\put(103,78){\circle*{3}}
\put(103,35){\circle*{3}}
\put(-40,121){\circle*{3}}
\put(-40,78){\circle*{3}}
\put(-40,35){\circle*{3}}
\put(143,121){\circle*{3}}
\put(143,78){\circle*{3}}
\put(143,35){\circle*{3}}
\put(-53,130){\circle*{0}}
\put(-53,87){\circle*{0}}
\put(-53,69){\circle*{0}}
\put(-53,112){\circle*{0}}
\put(-53,44){\circle*{0}}
\put(-53,26){\circle*{0}}
\put(156,44){\circle*{0}}
\put(156,26){\circle*{0}}
\put(156,130){\circle*{0}}
\put(156,112){\circle*{0}}
\put(156,69){\circle*{0}}
\put(156,87){\circle*{0}}

\emline{-17}{35}{1}{-17}{78}{2}
\emline{-17}{78}{1}{-17}{121}{2}
\bezier{500}(-17, 35)(-32,90)(-17, 121)
\emline{60}{35}{1}{60}{78}{2}
\emline{60}{121}{1}{60}{78}{2}
\bezier{500}(60, 35)(45, 90)(60, 121)
\emline{103}{35}{1}{103}{78}{2}
\emline{103}{121}{1}{103}{78}{2}
\bezier{500}(103,35)(88, 90)(103, 121)
\emline{-40}{121}{1}{-40}{78}{2}
\emline{-40}{35}{1}{-40}{78}{2}
\bezier{500}(-40,35)(-53, 90)(-40, 121)
\emline{143}{121}{1}{143}{78}{2}
\emline{143}{35}{1}{143}{78}{2}
\bezier{500}(143,35)(128, 90)(143, 121)
\emline{156}{130}{1}{-53}{130}{2}
\bezier{500}(156, 130)(166, 121)(156, 112)
\bezier{500}(-53, 130)(-63, 121)(-53, 112)
\bezier{500}(-53, 87)(-63, 78)(-53, 69)
\emline{-53}{87}{1}{156}{87}{2}
\emline{-53}{69}{1}{156}{69}{2}
\bezier{500}(156, 87)(166, 78)(156, 69)
\bezier{500}(-53, 44)(-63, 35)(-53, 26)
\emline{-53}{44}{1}{156}{44}{2}
\emline{-53}{26}{1}{156}{26}{2}
\bezier{500}(156, 44)(166, 35)(156, 26)
\emline{156}{112}{1}{-53}{112}{2}
\put(-10,16){\makebox(0,0)[cc]{\small{$(3,x_i)$}}}
\put(-10,138){\makebox(0,0)[cc]{\small{$(1,x_i)$}}}
\put(62,16){\makebox(0,0)[cc]{\small{$(3,x_j)$}}}
\put(62,138){\makebox(0,0)[cc]{\small{$(1,x_j)$}}}
\put(106,138){\makebox(0,0)[cc]{\small{$(1,x_k)$}}}
\put(106,16){\makebox(0,0)[cc]{\small{$(3,x_k)$}}}
\put(-47,16){\makebox(0,0)[cc]{\small{$(3,x_1)$}}}
\put(146,16){\makebox(0,0)[cc]{\small{$(3,x_v)$}}}
\put(-47,138){\makebox(0,0)[cc]{\small{$(1,x_1)$}}}
\put(146,138){\makebox(0,0)[cc]{\small{$(1,x_v)$}}}
\put(25,78){\makebox(0,0)[cc]{$\cdots$}}
\put(25,35){\makebox(0,0)[cc]{$\cdots$}}
\put(25,121){\makebox(0,0)[cc]{$\cdots$}}
\put(-28,78){\makebox(0,0)[cc]{$\cdot$}}
\put(-28,35){\makebox(0,0)[cc]{$\cdot$}}
\put(-28,121){\makebox(0,0)[cc]{$\cdot$}}
\put(81,78){\makebox(0,0)[cc]{$\cdots$}}
\put(81,35){\makebox(0,0)[cc]{$\cdots$}}
\put(81,121){\makebox(0,0)[cc]{$\cdots$}}
\put(123,78){\makebox(0,0)[cc]{$\cdots$}}
\put(123,35){\makebox(0,0)[cc]{$\cdots$}}
\put(123,121){\makebox(0,0)[cc]{$\cdots$}}
\end{picture}
%
%
\def\emline#1#2#3#4#5#6{%
\put(#1,#2){\special{em:moveto}}%
\put(#4,#5){\special{em:lineto}}}
\def\newpic#1{}
%
%
%
\unitlength 0.23mm
\special{em:linewidth 0.4pt}
\linethickness{0.4pt}
\begin{picture}(150,150)(-190,0)
%
\put(-17,78){\circle*{3}} \put(-17,35){\circle*{3}}
\put(-17,121){\circle*{3}}
\put(60,121){\circle*{3}} \put(60,78){\circle*{3}}
\put(60,35){\circle*{3}}
\put(103,121){\circle*{3}} \put(103,78){\circle*{3}}
\put(103,35){\circle*{3}}
\put(-53,130){\circle*{0}} \put(-53,87){\circle*{0}}
\put(-53,69){\circle*{0}} \put(-53,112){\circle*{0}}
\put(-53,44){\circle*{0}} \put(-53,26){\circle*{0}}
\put(156,44){\circle*{0}} \put(156,26){\circle*{0}}
\put(156,130){\circle*{0}} \put(156,112){\circle*{0}}
\put(156,69){\circle*{0}} \put(156,87){\circle*{0}}

\emline{-17}{78}{1}{60}{78}{2}
\bezier{500}(-17, 78)(43, 88)(103, 78)
\emline{-17}{35}{1}{60}{35}{2}
\bezier{500}(-17, 35)(43, 45)(103, 35)
\emline{60}{121}{1}{103}{121}{2}
\emline{60}{121}{1}{-17}{121}{2}
\emline{60}{78}{1}{103}{78}{2}
\emline{60}{35}{1}{103}{35}{2}
\bezier{500}(103, 121)(43, 131)(-17, 121)
\emline{156}{130}{1}{-53}{130}{2}
\bezier{500}(156, 130)(166, 121)(156, 112)
\bezier{500}(-53, 130)(-63, 121)(-53, 112)
\bezier{500}(-53, 87)(-63, 78)(-53, 69)
\emline{-53}{87}{1}{156}{87}{2}
\emline{-53}{69}{1}{156}{69}{2}
\bezier{500}(156, 69)(166, 78)(156, 87)
\bezier{500}(-53, 44)(-63, 35)(-53, 26)
\emline{-53}{44}{1}{156}{44}{2}
\emline{-53}{26}{1}{156}{26}{2}
\bezier{500}(156, 44)(166, 35)(156, 26)
\emline{156}{112}{1}{-53}{112}{2}
\put(-15,16){\makebox(0,0)[cc]{\small{$(3,x_i)$}}}
\put(-15,138){\makebox(0,0)[cc]{\small{$(1,x_i)$}}}
\put(62,16){\makebox(0,0)[cc]{\small{$(3,x_j)$}}}
\put(62,138){\makebox(0,0)[cc]{\small{$(1,x_j)$}}}
\put(106,138){\makebox(0,0)[cc]{\small{$(1,x_k)$}}}
\put(106,16){\makebox(0,0)[cc]{\small{$(3,x_k)$}}}
\end{picture}
\vspace{-1mm} \caption{Blocks of $\Omega_{0,\beta}$ and
$\Omega_{1,\beta}$} \label{type2}
\end{figure}

%
\begin{figure}[ht]
\def\emline#1#2#3#4#5#6{%
\put(#1,#2){\special{em:moveto}}%
\put(#4,#5){\special{em:lineto}}}
\def\newpic#1{}
\unitlength 0.23mm
\special{em:linewidth 0.4pt}
\linethickness{0.4pt}
\begin{picture}(150,150)(-362,6)
%
\put(-17,78){\circle*{3}} \put(-17,35){\circle*{3}}
\put(-17,121){\circle*{3}}
\put(60,121){\circle*{3}} \put(60,78){\circle*{3}}
\put(60,35){\circle*{3}}
\put(103,121){\circle*{3}}
\put(103,78){\circle*{3}}
\put(103,35){\circle*{3}}
\put(-53,130){\circle*{0}}
\put(-53,87){\circle*{0}}
\put(-53,69){\circle*{0}}
\put(156,130){\circle*{0}}
\put(156,69){\circle*{0}}
\put(156,87){\circle*{0}}
\put(-53,44){\circle*{0}}
\put(-53,26){\circle*{0}}
\put(-53,112){\circle*{0}}
\put(156,44){\circle*{0}}
\put(156,26){\circle*{0}}
\put(156,112){\circle*{0}}
\emline{-17}{78}{1}{60}{121}{2}
\emline{-17}{78}{1}{103}{35}{2}
\emline{-17}{35}{1}{60}{78}{2}
\emline{-17}{35}{1}{103}{121}{2}
\emline{60}{121}{1}{103}{35}{2}
\emline{60}{78}{1}{103}{121}{2}
\emline{60}{35}{1}{103}{78}{2}
\emline{60}{35}{1}{-17}{121}{2}
\emline{103}{78}{1}{-17}{121}{2}
\emline{156}{130}{1}{-53}{130}{2}
\bezier{500}(156, 130)(166, 121)(156, 112)
\bezier{500}(-53, 130)(-63, 121)(-53, 112)
\bezier{500}(-53, 87)(-63, 78)(-53, 69)
\emline{-53}{87}{1}{156}{87}{2}
\emline{-53}{69}{1}{156}{69}{2}
\bezier{500}(156, 69)(166, 78)(156, 87)
\bezier{500}(-53, 44)(-63, 35)(-53, 26)
\emline{-53}{44}{1}{156}{44}{2}
\emline{-53}{26}{1}{156}{26}{2}
\bezier{500}(156, 44)(166, 35)(156, 26)
\emline{156}{112}{1}{-53}{112}{2}
\put(62,16){\makebox(0,0)[cc]{\small{$(3,x_j)$}}}
\put(106,94){\makebox(0,0)[cc]{\small{$(2,x_k)$}}}
\put(106,138){\makebox(0,0)[cc]{\small{$(1,x_k)$}}}
\put(62,138){\makebox(0,0)[cc]{\small{$(1,x_j)$}}}
\put(-15,138){\makebox(0,0)[cc]{\small{$(1,x_i)$}}}
\end{picture}
\def\emline#1#2#3#4#5#6{%
\put(#1,#2){\special{em:moveto}}%
\put(#4,#5){\special{em:lineto}}}
\def\newpic#1{}
\unitlength 0.23mm
\special{em:linewidth 0.4pt}
\linethickness{0.4pt}
\begin{picture}(150,150)(97,6)
%
\put(-17,78){\circle*{3}}
\put(-17,35){\circle*{3}}
\put(-17,121){\circle*{3}}
\put(60,121){\circle*{3}}
\put(60,78){\circle*{3}}
\put(60,35){\circle*{3}}
\put(103,121){\circle*{3}}
\put(103,78){\circle*{3}}
\put(103,35){\circle*{3}}
\put(-53,130){\circle*{0}}
\put(-53,87){\circle*{0}}
\put(-53,69){\circle*{0}}
\put(-53,44){\circle*{0}}
\put(-53,26){\circle*{0}}
\put(-53,112){\circle*{0}}
\put(156,69){\circle*{0}}
\put(156,87){\circle*{0}}
\put(156,44){\circle*{0}}
\put(156,26){\circle*{0}}
\put(156,112){\circle*{0}}
\put(156,130){\circle*{0}}
\emline{-17}{78}{1}{60}{35}{2}
\emline{-17}{78}{1}{103}{121}{2}
\emline{-17}{35}{1}{60}{121}{2}
\emline{-17}{35}{1}{103}{78}{2}
\emline{60}{121}{1}{103}{78}{2}
\emline{60}{78}{1}{103}{35}{2}
\emline{60}{78}{1}{-17}{121}{2}
\emline{60}{35}{1}{103}{121}{2}
\emline{-17}{121}{1}{103}{35}{2}
\emline{156}{130}{1}{-53}{130}{2}
\bezier{500}(156, 130)(166, 121)(156, 112)
\bezier{500}(-53, 130)(-63, 121)(-53, 112)
\bezier{500}(-53, 87)(-63, 78)(-53, 69)
\emline{-53}{87}{1}{156}{87}{2}
\emline{-53}{69}{1}{156}{69}{2}
\bezier{500}(156, 69)(166, 78)(156, 87)
\bezier{500}(-53, 44)(-63, 35)(-53, 26)
\emline{-53}{44}{1}{156}{44}{2}
\emline{-53}{26}{1}{156}{26}{2}
\bezier{500}(156, 44)(166, 35)(156, 26)
\emline{156}{112}{1}{-53}{112}{2}
\put(-15,94){\makebox(0,0)[cc]{\small{$(2,x_i)$}}}
\put(-15,138){\makebox(0,0)[cc]{\small{$(1,x_i)$}}}
\put(106,138){\makebox(0,0)[cc]{\small{$(1,x_k)$}}}
\put(62,138){\makebox(0,0)[cc]{\small{$(1,x_j)$}}}
\put(62,16){\makebox(0,0)[cc]{\small{$(3,x_j)$}}}
\end{picture}
\vspace*{-1mm}
\caption{Blocks of $\Omega_{2,\beta}$ and
$\Omega_{3,\beta}$}\label{type4}
\end{figure}
We note that there is only one parallel class in
$\Omega_{0,\beta}$, but there are $\frac{v-1}{2}$  parallel
classes in each of other types, so we have $\frac{3v-1}{2}$
parallel classes and each class has $v$ blocks.

Clearly, ${\cal K}$  is a ${\rm KTS}(3v)$. The number of colors
needed in a silver coloring of $1$-${\rm BIG}({\cal K})$ is equal
to $ \frac{9v-7}{2}$. We color $0$ the vertices corresponding to
the blocks in $\Omega_{0,\beta}$ class. The label of each block in
other classes, which is shown as its index, is the color of its
corresponding vertex in $1$-${\rm BIG}({\cal K})$: $(m,\beta)$,
$1 \le m \le 9$, $1\le \beta \le \frac{v-1}{2}$. It is easy to
check that this is a proper coloring and all vertices in
$\Omega_{0,\beta}$ class, i.e. the $\alpha$-set, are rainbow.
~\end{proof}


Next theorem is a generalization of the construction introduced in
Lemma~\ref{9midvKTS}.
\begin{theorem}
\label{RBIBDfork} Assume there exist an affine plane ${\cal A}=
{\rm AG }(2,k)$, and  a resolvable  balanced incomplete block
design ${\cal D}={\rm RBIBD}(v,k,1)$. Then there exists a
 ${\cal D^{*}}= {\rm RBIBD}(kv,k,1)$ where
$1$-${\rm BIG}({\cal D^*})$ is  silver.
\end{theorem}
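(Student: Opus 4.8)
The plan is to carry out the explicit construction of Lemma~\ref{9midvKTS} in full generality, with ${\rm AG}(2,3)$ replaced by the given ${\cal A}={\rm AG}(2,k)$ and the ${\rm KTS}(v)$ replaced by the given ${\cal D}={\rm RBIBD}(v,k,1)$. First I would coordinatize ${\cal A}$ so that its point set is the grid $\{1,\dots,k\}\times\{1,\dots,k\}$ and one distinguished parallel class $\Theta_0$ is the set of ``columns'' $\{(1,j),\dots,(k,j)\}$, $1\le j\le k$; the other $k$ parallel classes $\Theta_1,\dots,\Theta_k$ then contain exactly $k^2$ lines $a_1,\dots,a_{k^2}$. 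The structural fact I would record immediately is that, since two lines from distinct parallel classes of an affine plane meet in a single point, every $a_m$ meets each column in exactly one point, so $a_m=\{(\tau_m(p),p)\mid 1\le p\le k\}$ for a well-defined map $\tau_m\colon\{1,\dots,k\}\to\{1,\dots,k\}$. Writing each block of ${\cal D}$ in sorted form $\{x_{i_1},\dots,x_{i_k}\}$ (so that its $p$-th element is $x_{i_p}$), I set $V^*=\{1,\dots,k\}\times V$ and define the blocks of ${\cal D}^*$ to be: the single ``vertical'' parallel class $\Omega_0=\{\{(1,x_i),\dots,(k,x_i)\}\mid 1\le i\le v\}$; and, for each parallel class $\pi_\beta$ of ${\cal D}$ and each $a_m$, the derived block $\{(\tau_m(p),x_{i_p})\mid 1\le p\le k\}$ carrying the label (colour) $(m,\beta)$. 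Collecting the derived blocks according to the class $\Theta_t$ of $a_m$ gives classes $\Omega_{t,\beta}$ exactly as in the lemma.

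Second, I would verify that ${\cal D}^*$ is an ${\rm RBIBD}(kv,k,1)$. Resolvability is immediate, since $\Omega_0$ partitions $V^*$ and each $\Omega_{t,\beta}$ partitions $V^*$ (because $\pi_\beta$ partitions $V$ and $\Theta_t$ partitions the grid). The $2$-design property is the heart of the argument, and I would prove it by checking that every pair of points lies in exactly one block. A pair $\{(a,x_i),(b,x_i)\}$ sharing its $V$-coordinate lies in the $\Omega_0$-block through $x_i$ and in no other block, because every derived block has pairwise distinct $V$-coordinates. A pair $\{(a,x_i),(b,x_j)\}$ with $x_i\ne x_j$ lies in no $\Omega_0$-block and is covered exactly once among the derived blocks: the pair $\{x_i,x_j\}$ lies in a unique block $B\in\pi_\beta$ of ${\cal D}$, at two distinct positions $p\ne q$, whereupon $(a,p)$ and $(b,q)$ lie in distinct columns of ${\cal A}$ and hence determine a unique line $a_m\notin\Theta_0$; the block derived from $B$ and this $a_m$ is then the unique block of ${\cal D}^*$ containing both points. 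I expect the careful matching of ``positions in $B$'' with ``columns of ${\cal A}$'' to be the most delicate bookkeeping.

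Finally I would prove that $G_1=1$-${\rm BIG}({\cal D}^*)$ is silver. Since distinct blocks of a Steiner $2$-design meet in $0$ or $1$ points, an independent set of $G_1$ is precisely a partial parallel class, of at most $\frac{kv}{k}=v$ blocks, so $\alpha(G_1)=v$ and the parallel class $\Omega_0$ is an $\alpha$-set. I would colour every block of $\Omega_0$ with a single colour $0$, and every derived block with its label $(m,\beta)$, where $1\le m\le k^2$ and $1\le\beta\le\frac{v-1}{k-1}$. This uses $1+\frac{k^2(v-1)}{k-1}$ colours; by Remark~\ref{srg1} the graph $G_1$ is $\frac{k^2(v-1)}{k-1}$-regular, so this is exactly one more than its degree, as a silver colouring requires, and hitting the count exactly is what makes the colouring admissible. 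The colouring is proper because each colour class is a partial parallel class: colour $0$ is the class $\Omega_0$, and all derived blocks sharing a label $(m,\beta)$ arise from pairwise disjoint blocks of $\pi_\beta$ and are therefore pairwise disjoint. For the rainbow condition, fix a block $b_i=\{(1,x_i),\dots,(k,x_i)\}$ of $\Omega_0$; for each $\beta$ the element $x_i$ lies in a unique block of $\pi_\beta$, say at position $p$, and the $k^2$ blocks derived from it meet $b_i$ in the single point $(\tau_m(p),x_i)$, so they are neighbours of $b_i$ realizing all the colours $(m,\beta)$ for that $\beta$. Letting $\beta$ run over all parallel classes of ${\cal D}$ shows that every label colour appears in $N(b_i)$, while colour $0$ appears at $b_i$ itself; hence $b_i$ is rainbow, and $G_1$ is silver.
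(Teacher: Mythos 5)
Your proposal is correct and follows essentially the same route as the paper: your maps $\tau_m$ are just an explicit coordinate form of the paper's functions $\Psi_b$, your labelled blocks and colouring $(m,\beta)$ coincide with the paper's $\Omega_{\alpha,\beta}$ classes and colours $(a,\beta)$, and your rainbow argument (the unique block of $\pi_\beta$ through $x_i$ yields neighbours of $b_i$ in all colours $(m,\beta)$) is the paper's argument that every line $a\notin\Theta_0$ meets each block of $\Theta_0$. The only difference is that you verify in detail what the paper asserts without proof --- that ${\cal D^*}$ is a $2$-design, that $\alpha(G_1)=v$ with $\Omega_0$ an $\alpha$-set, and that the number of colours is exactly $\frac{k^2(v-1)}{k-1}+1=r+1$ --- which strengthens rather than changes the argument.
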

\begin{proof}
Let $ V({\cal A})=\{ (i,j) \ | \ 1 \leq i,j \leq k \}$
 and denote its  parallel classes by
$ \Theta_0,  \Theta_1,\ldots,  \Theta_{k}$. Specifically  we let
\begin{displaymath}
\Theta_0 = \Big\{\{(1,j), (2,j), \ldots, (k,j) \}| \  j =1,2,
\ldots, k \Big\}.
\end{displaymath}
Also we let ${V(\cal D)}=\{x_1,x_2,\ldots, x_v\}$  with parallel
classes $\pi_1, \pi_2, \dots, \pi_{\frac{v-1}{k-1}}$.

For each block $b=\{x_{s_{1}},x_{s_{2}}, \ldots, x_{s_{k}}\}$ of
${\cal D}$ we consider  an ordering on $b$ such that
$$x_{s_{i}} \prec x_{s_{j}} \ \ \Longleftrightarrow  \ \ s_i < s_j,$$ and
define a function:
$$\Psi_b: V({\cal A}) \rightarrow \{1,2, \ldots, k \} \times
\{x_{s_{1}},x_{s_{2}}, \ldots,x_{s_{k}}\}$$
$$\Psi_b(i,j) =  (i,x_{s_{j}}).$$
We extend $\Psi_b$ for each block $a$ of ${\cal A}$ as \
$\Psi_b(a)=\{ \Psi_b(i,j)|  \ (i,j) \in a \}.$

Now we construct a design ${\cal D^{*}}  = (V^*, {\cal B^{*}})$,
as in the following:\\
\centerline{
$
\begin{array}{lll}
V^* = \{1,2,\ldots,k\} \times V({\cal D}).\\
{\cal B^{*}} = \{\Psi_b(a)| \   b \  {\rm and} \ a \ {\rm are \
blocks \ of \ {\cal D} \ and \ {\cal A},\ respectively}  \}.
\end{array}$}

\noindent
See Figure~\ref{mortezaeefar}.

\begin{figure}[ht]
  \setlength{\unitlength}{1bp}%
  \begin{picture}(388.30, 180.11)(0,0)
  \put(0,0){\includegraphics{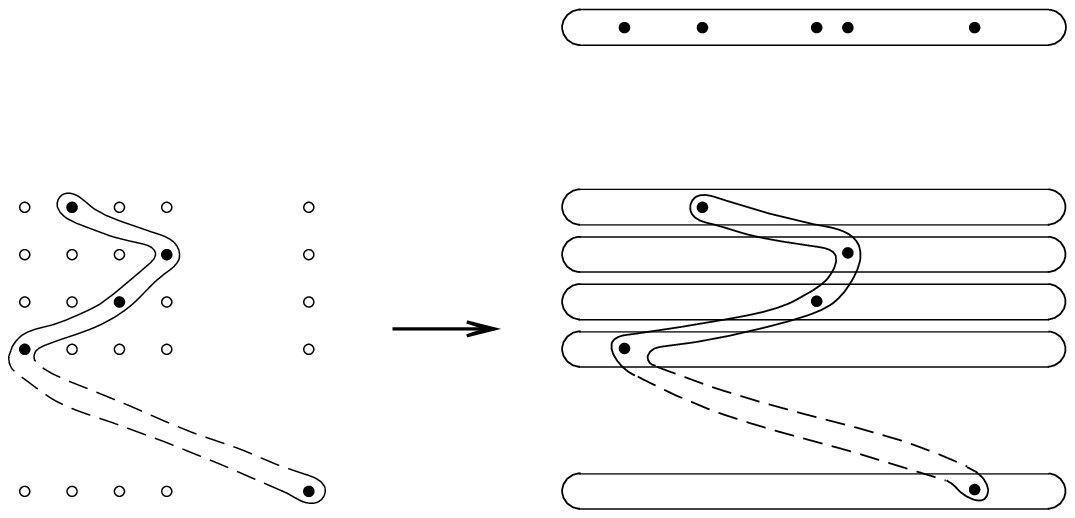}}
  \put(70.61,111.36){\fontsize{14.23}{17.07}\selectfont $\cdots$}
  \put(70.61,97.73){\fontsize{14.23}{17.07}\selectfont $\cdots$}
  \put(70.61,84.09){\fontsize{14.23}{17.07}\selectfont $\cdots$}
  \put(70.61,70.46){\fontsize{14.23}{17.07}\selectfont $\cdots$}
  \put(70.61,29.55){\fontsize{14.23}{17.07}\selectfont $\cdots$}
  \put(264.36,163.33){\fontsize{14.23}{17.07}\selectfont $\cdots$}
  \put(272.81,50.69){\fontsize{14.23}{17.07}\selectfont $\vdots$}
  \put(15.73,50.01){\fontsize{14.23}{17.07}\selectfont $\vdots$}
  \put(29.37,50.01){\fontsize{14.23}{17.07}\selectfont $\vdots$}
  \put(43.00,50.01){\fontsize{14.23}{17.07}\selectfont $\vdots$}
  \put(56.64,50.01){\fontsize{14.23}{17.07}\selectfont $\vdots$}
  \put(97.54,50.01){\fontsize{14.23}{17.07}\selectfont $\vdots$}
  \put(131.98,86.70){\fontsize{11.38}{13.66}\selectfont $\Psi_b$}
  \put(5.67,122.02){\fontsize{8.54}{10.24}\selectfont $(1,1)$}
  \put(97.44,122.02){\fontsize{8.54}{10.24}\selectfont $(1,k)$}
  \put(5.67,19.42){\fontsize{8.54}{10.24}\selectfont $(k,1)$}
  \put(97.44,18.98){\fontsize{8.54}{10.24}\selectfont $(k,k)$}
  \put(208.24,175.76){\fontsize{8.54}{10.24}\selectfont $x_{s_2}$}
  \put(184.24,175.76){\fontsize{8.54}{10.24}\selectfont $x_{s_1}$}
  \put(285.24,175.76){\fontsize{8.54}{10.24}\selectfont $x_{s_k}$}
  \put(202.24,125.76){\fontsize{8.54}{10.24}\selectfont $(1,x_{s_2})$}
  \put(281.59,16.10){\fontsize{8.54}{10.24}\selectfont $(k,x_{s_k})$}
  \put(53.03,8.12){\fontsize{11.38}{13.66}\selectfont ${\cal A}$}
  \put(242.13,8.12){\fontsize{11.38}{13.66}\selectfont ${\cal D^{*}}$}
  \put(242.13,144.33){\fontsize{11.38}{13.66}\selectfont ${\cal D}$}
  \end{picture}%
\caption{Blocks of ${\cal D^*}$ are constructed by using blocks
of ${\cal A}$} \label{mortezaeefar}
\end{figure}

\noindent
${\cal  D^{*}}$ is an {\rm RBIBD} with the following parallel
classes:

\centerline{$\Omega_{\alpha,\beta} = \{ \Psi_b(a)| \  a \in
\Theta_{\alpha}, b \in \pi_{\beta}\},$ \
for each $ 0 \leq \alpha \leq k$ and  $1 \leq \beta \leq
\frac{v-1}{k-1}.$}
\noindent
Note that:
$$\Omega_{0,1} = \Omega_{0,2}= \cdots =
\Omega_{0,\frac{v-1}{k-1}}= \Big\{ \{(1,x_{s}), (2,x_{s}),
\ldots, (k,x_{s}) \}| \  s =1,2, \ldots, v \Big\}.$$
We show that $1$-${\rm BIG}({\cal D^{*}})$ is silver with respect
to the $\alpha$-set

\centerline{$\begin{array}{lll} I^*&=& \{\Psi_b(a)| \  a \in
\Theta_0 \ \ {\rm and}   \ b \ \textrm{ is \ a
 \ block \ of } \ {\cal D} \} \\
 & =&
\Big\{ \{(1,x_{s}), (2,x_{s}), \ldots, (k,x_{s}) \}| \  s =1,2,
\ldots, v \Big\},
\end{array}$}
\noindent
by  the following coloring:
$$c:{\cal B^{*}} \longrightarrow \{0 \} \cup
\{(a,\beta)|  \  a \ {\rm is \ a \ block \  of \  {\cal A}\setminus  \Theta_0 \  and}
 \ 1 \leq \beta \leq \frac{v-1}{k-1} \}$$
\begin{displaymath}
 \Psi_b(a) \longmapsto
\left\{ \begin{array}{lll}
0             &  \hspace{.0cm} \textrm{if $a\in  \Theta_0 $},\\  \\
(a,\beta)     &  \hspace{.0cm} \textrm{if $a \notin  \Theta_0,$
and $b \in \pi_{\beta}$}.
\end{array} \right.
\end{displaymath}

We show that $c$ is a proper coloring and  any vertex $b^* \in
I^*$ is rainbow. Note that all the vertices of $I^*$ have color
$0$. Let $\Psi_{b_{1}}(a_{1})$ and $\Psi_{b_{2}}(a_{2})$ be two
blocks of ${\cal D^*}$
 with  the same color $(a,\beta)$.
Then we have $b_{1},b_{2}\in \pi_{\beta}$.
 Therefore  $b_{1}$ and $b_{2}$ are disjoint blocks of ${\cal D}$, so
$\Psi_{b_{1}}(a_{1})$ and $\Psi_{b_{2}}(a_{2})$ are disjoint.
Thus $c$ is proper.

To show silverness,  for a fixed $s$ let $b_s^*=\{(1,x_{s}),
(2,x_{s}), \ldots, (k,x_{s}) \}$ be a block of $I^*$. By
definition, for any given nonzero color like $(a,\beta)$ we have
$a \notin \Theta_0$, and  there exists a unique block $b$ of $
\pi_{\beta}$ which contains $x_{s}$ and the color of $\Psi_{b}(a)$
is  $(a,\beta)$. Since in  ${\cal A}$, the block $a$ intersects
each block of $\Theta_0$, thus by  definition of ${\cal B^*}$,
$\Psi_{b}(a)$ intersects $b_s^*$ in ${\cal D^*}$, so the color
$(a,\beta)$ appears in the neighborhood of $b_s^*$.
\end{proof}

In the next theorem for any ${\cal{D}} =S(2,k,v)$, we show a lower
bound for $\alpha(G_1)$,
 in order $G_1=
1$-${\rm BIG(\cal{D}})$ to be silver.
\begin{theorem}
\label{lowerbound}
Let $\cal{D}$ be an $S(2,k,v)$, and   $G_1= 1$-${\rm BIG(\cal{D}})$.
If $\alpha(G_1) > k \lfloor
\frac{v(v-1)}{k^2 v - k^3 + k^2- k}\rfloor$,  then $G_1$ is not
silver.
\end{theorem}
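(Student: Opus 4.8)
The plan is to assume that $G_1$ is silver with respect to an $\alpha$-set $I$ (so $|I|=\alpha(G_1)$) and to derive the upper bound $\alpha(G_1)\le k\bigl\lfloor \tfrac{v(v-1)}{k^2v-k^3+k^2-k}\bigr\rfloor$; the statement is then its contrapositive. Two preliminary observations fix the quantities. First, as in Theorem~\ref{forallk}, $G_1$ is $R$-regular with $R=\tfrac{k(v-k)}{k-1}$, so a silver coloring uses exactly $R+1$ colors, while the number of vertices is $b=\tfrac{v(v-1)}{k(k-1)}$; a direct computation gives $\tfrac{b}{R+1}=\tfrac{v(v-1)}{k^2v-k^3+k^2-k}$, so the target is precisely $\alpha(G_1)\le k\bigl\lfloor\tfrac{b}{R+1}\bigr\rfloor$. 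Second, since $\cal D$ is a Steiner $2$-design, two distinct blocks meet in at most one point; hence an independent set of $G_1$ (no two blocks meeting in exactly one point) is exactly a set of pairwise disjoint blocks, i.e. a partial parallel class. Consequently, for any block $u$, the blocks of $I$ adjacent to $u$ meet $u$ in pairwise distinct points, so $|N(u)\cap I|\le k$.

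The core step is a per-color lower bound on the color classes. I would fix a silver coloring $c$ with color classes $C_\gamma$ and write $I_\gamma=I\cap C_\gamma$. For a fixed color $\gamma$ and any $x\in I$ with $c(x)\ne\gamma$, rainbowness forces $\gamma$ to appear exactly once in $N[x]$, hence on a unique neighbor of $x$, which lies in $C_\gamma\setminus I$ because $I$ is independent. Counting the edges between $I$ and $C_\gamma$ in two ways then yields
$$|I|-|I_\gamma|=\sum_{u\in C_\gamma\setminus I}|N(u)\cap I|\le k\bigl(|C_\gamma|-|I_\gamma|\bigr),$$
and discarding the nonnegative term $(k-1)|I_\gamma|$ gives $|I|\le k|C_\gamma|$ for \emph{every} color $\gamma$. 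Thus each color class satisfies $|C_\gamma|\ge\alpha(G_1)/k$.

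Finally I would invoke integrality. Since $|C_\gamma|$ is an integer, $|C_\gamma|\ge\lceil\alpha(G_1)/k\rceil$ for each of the $R+1$ colors, and summing gives $b=\sum_\gamma|C_\gamma|\ge(R+1)\lceil\alpha(G_1)/k\rceil$. Hence $\lceil\alpha(G_1)/k\rceil\le\tfrac{b}{R+1}$, and as the left-hand side is an integer, $\lceil\alpha(G_1)/k\rceil\le\bigl\lfloor\tfrac{b}{R+1}\bigr\rfloor$. Since $\alpha(G_1)/k\le\lceil\alpha(G_1)/k\rceil$, this gives $\alpha(G_1)\le k\bigl\lfloor\tfrac{b}{R+1}\bigr\rfloor$, as required.

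The step I expect to be the main obstacle, and the place where the real content lies, is the integrality rounding at the end rather than the double count. Indeed, without the floor the argument only yields $\alpha(G_1)\le\tfrac{kb}{R+1}$, and a short computation shows $\tfrac{kb}{R+1}>\tfrac{v}{k}\ge\alpha(G_1)$ for all admissible parameters, so the unrounded bound is vacuous; the entire strength of the theorem comes from rounding $\tfrac{b}{R+1}$ down. This forces the estimate $|C_\gamma|\ge\alpha(G_1)/k$ to be established for \emph{all} colors simultaneously (not merely for a color absent from $I$), so that the summation over all $R+1$ classes is legitimate and the floor can be applied; securing this uniform per-color bound is the delicate point of the proof.
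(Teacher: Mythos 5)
Your proposal is correct and is essentially the paper's own argument: the same double count (each rainbow vertex of $I$ sees a given color exactly once, while a block has only $k$ points and so is adjacent to at most $k$ blocks of the partial parallel class $I$) combined with pigeonhole on color-class sizes, yielding $\alpha(G_1)\le k\lfloor v(v-1)/(k^2v-k^3+k^2-k)\rfloor$. The only cosmetic difference is the order of steps—the paper first pigeonholes a single color $\iota$ used on at most $\lfloor |V(G_1)|/|C|\rfloor$ vertices and counts just that color via the sets $X_i(\iota)$, which shows your closing claim that the per-color bound must be established for \emph{all} colors simultaneously is not actually necessary: one rare color suffices, and the floor then comes for free from integrality of that class's size.
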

\begin{proof}
$G_1$ is a $\frac{k(v-k)}{(k-1)}$--regular graph with
$\frac{v(v-1)}{k(k-1)}$ vertices. Let $I$ be an $\alpha$-set, and assume that
 $G_1$ has a silver coloring
with respect to $I$ with $C$ as  the set of colors,
$|C|=\frac{k(v-k)}{k-1}+1$. A color like $\iota$ exists that is
used in the coloring of at most $\lfloor
\frac{|V(G_1)|}{|C|}\rfloor=\lfloor \frac{v(v-1)}{k^2 v - k^3 +
k^2- k}\rfloor$ vertices of $G_1$. For a set $X \subseteq V(G_1)$
we denote the set of vertices with color $\iota$ in $X$ by
$X(\iota)$. By counting the number of appearances of color
$\iota$ in $I$ and in the neighborhood of $I$ we obtain,

$\alpha(G_1) = |I(\iota)| + |X_1(\iota)| + 2|X_2(\iota)| +
\cdots + k
|X_k(\iota)|$ \\
\hspace*{13mm} $\leq k(|I(\iota)| + |X_1(\iota)| + |X_2(\iota)| +
\cdots +
|X_k(\iota)|)$ \\
\hspace*{13mm} $\leq k \lfloor {\frac{v(v-1)}{k^2 v - k^3 + k^2- k}}
\rfloor$ \\
\hspace*{13mm} $< \alpha(G_1).$
\\
A  contradiction.~\end{proof}
\begin{example}
\label{10sts} It is easy to check that for any of two ${\rm
STS}(13)$s, $\alpha(G_1)
=4$. 
For $80$ nonisomorphic
${\rm STS}(15)$s,  we have $\alpha(G_1) =4$ or
$5$~{\rm(see~\cite{MR2246267}, page 32)}. Also there are $18$
nonisomorphic $S(2,4,25)$~{\rm(see~\cite{MR2246267}, page 34)},
by a computer search they have
 $\alpha(G_1) =5 \ {\rm or} \ 6$.
So by Theorem~\ref{lowerbound}  none of them  has a silver $G_1$.
\end{example}
\begin{remark}
\label{remk1} Let $G_1$ be the $1$-block intersection graph of an
$S(2,k,v)$ with a parallel class. Then $\alpha(G_1) =
\frac{v}{k}$, and all the  elements of $V$ appear in the blocks
corresponding to each $\alpha$-set. Let $I$ be an $\alpha$-set for
$G_1$, therefore any vertex   of $V(G_1)\setminus I$ is adjacent
to $k$ vertices of $I$. Thus  $|X_1|=|X_2|=\cdots = |X_{k-1}|=0,
\  |X_k|= \frac{v(v-k)}{k(k-1)}.$

If an $S(2,k,v)$ has a near parallel class, then $\alpha(G_1) =
\frac{v-1}{k}$, and each $\alpha$-set contains all the  elements of $V$
 except one. Hence in this case any vertex  of
$V(G_1)\setminus I$ is adjacent to either $(k-1)$ or $k$ vertices
of $I$, and $|X_1|=|X_2|= \cdots = |X_{k-2}|=0, \  |X_{k-1}|=
\frac{v-1}{k-1}, \ |X_k|= \frac{(v-1)(v-2k+1)}{k(k-1)}$.
\end{remark}
\begin{theorem}
\label{APC}
Let $\cal{D}$ be an $S(2,k,v)$, with a near parallel
class. Then $G_1= 1$-${\rm BIG(\cal{D}})$ is not silver.
\end{theorem}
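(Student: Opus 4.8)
The plan is to follow the counting philosophy of Theorem~\ref{forallk} but to exploit a clique hidden in $G_1$. Write $m:=|I|=\frac{v-1}{k}$ and $r:=\frac{v-1}{k-1}$, and let $d=\frac{k(v-k)}{k-1}$ be the regularity, so a silver coloring uses $|C|=d+1$ colors. By Remark~\ref{remk1} I may assume $I$ is a near parallel class missing a single point $p$, that $X_1=\cdots=X_{k-2}=\emptyset$, and that every block outside $I$ meets exactly $k-1$ or $k$ blocks of $I$. The first step I would carry out is to identify $X_{k-1}$ geometrically: a block $B\notin I$ lies in $X_{k-1}$ precisely when one of its points is uncovered by $I$, i.e. precisely when $p\in B$ (since $I$ covers every point but $p$, and two points of $B$ cannot lie in a common $I$-block). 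Hence $X_{k-1}$ is the pencil of all blocks through $p$; any two of them meet exactly in $p$, so they are adjacent in $G_1$, and $X_{k-1}$ is a clique of size $r$. Consequently its $r$ blocks receive $r$ \emph{distinct} colors, and $|X_{k-1}(\iota)|\le 1$ for every color $\iota$. This clique observation is the crux of the argument.

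Next I would record the rainbow counting identity. Fixing a color $\iota$ and counting, for each $x\in I$, the unique vertex of color $\iota$ in $N[x]$, then regrouping by that vertex, gives
$$m=|I(\iota)|+(k-1)\,|X_{k-1}(\iota)|+k\,|X_k(\iota)|,$$
because an $\iota$-colored vertex of $X_j$ outside $I$ is adjacent to exactly $j$ vertices of $I$, while an $\iota$-colored vertex inside $I$ only covers itself. I would then single out the set $U$ of colors \emph{not} used on $I$; since $|C|>|I|$ this set is nonempty, with $|U|\ge |C|-m$.

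The decisive step combines the two facts. Because the $r$ blocks through $p$ use $r$ distinct colors while $I$ uses at most $m<r$ colors, some block through $p$ must carry a color $\iota_0\in U$; substituting $|I(\iota_0)|=0$ and $|X_{k-1}(\iota_0)|=1$ into the identity forces $m\equiv -1\pmod k$. Feeding this congruence back into the identity for an \emph{arbitrary} $\iota\in U$ yields $(k-1)|X_{k-1}(\iota)|\equiv -1\pmod k$, whence $|X_{k-1}(\iota)|=1$, since $1$ is the only element of $\{0,1\}$ that is $\equiv 1\pmod k$ for $k>2$. Thus every color of $U$ occurs on a distinct block of the pencil, so $|U|\le r$. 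The contradiction then comes from the size estimate $|C|>m+r$: a short computation gives $|C|-r=v-k$, so this reduces to $v-k>\frac{v-1}{k}$, i.e. $v>k+1$, which holds for every genuine $S(2,k,v)$ with $k>2$ admitting a near parallel class. Hence $|U|\ge |C|-m>r\ge |U|$, a contradiction, and no silver coloring exists.

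I expect the main obstacle to be the first step — recognizing and justifying that the blocks through the uncovered point $p$ form a clique, and hence must be rainbow-distinctly colored — because everything afterwards is bookkeeping with the counting identity together with the elementary inequality $v>k+1$.
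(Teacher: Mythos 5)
Your proof is correct and in essence coincides with the paper's own: both arguments reduce to a near parallel class via Remark~\ref{remk1}, exploit that $X_{k-1}$ is the clique formed by the $\frac{v-1}{k-1}$ blocks through the missed point, use the same rainbow counting identity modulo $k$, and rest on the same color-count inequality $|C|>|I|+|X_{k-1}|$ (your $v>k+1$). The only difference is organizational: the paper first takes a color avoiding $I\cup X_{k-1}$ to get $k\mid |I|$ and then a pencil color missing from $I$ to get $|I|\equiv -1 \pmod k$, whereas you derive the congruence $|I|\equiv -1\pmod k$ first and repackage the other half as the pigeonhole $|U|\le \frac{v-1}{k-1}<|C|-|I|$, which unpacked is exactly the paper's first step.
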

\begin{proof}
Let $I$ be an $\alpha$-set for $G_1$. Assume that $G_1$
has a silver coloring with respect to $I$ and $C$ is the set of
colors. $G_1$ is $\frac{k(v-k)}{k-1}$--regular,
$|C|=\frac{k(v-k)}{k-1}+1$ and $|I| = \frac{v-1}{k}$.
By  Remark~\ref{remk1}, $|X_{k-1}|= \frac{v-1}{k-1}$ and
 $|X_k|= \frac{(v-1)(v-2k+1)}{k(k-1)}$.
Since $|C| > |I \cup X_{k-1}|$, a color like $\iota$ exists that
is  used only in the coloring of vertices of $X_k$. The vertices
of $I$ are rainbow, so each of the vertices of  $X_k$ that have
color $\iota$, must be adjacent to $k$ different vertices of $I$.
Thus $|I|$ is a multiple of $k$, say $|I| = mk$.

Since $|X_{k-1}|= \frac{v-1}{k-1} > |I|$,   a color like $\iota'$
exists that is  used in the coloring of vertices of $X_{k-1}$
but  is not used in $I$. The induced subgraph on $ X_{k-1}$ is a
clique,
 so $\iota'$ appears only in
one vertex of $X_{k-1}$ and it has $(k-1)$ neighbors in $I$. Thus
$|I|-k+1$ vertices of $I$, each must have a neighbor in $X_k$
with color $\iota'$. Again vertices from $X_k$ that have color
$\iota'$, each must be adjacent to $k$ different vertices of $I$.
Therefore $|I|-k+1= (m-1)k+1$ is also a multiple of $k$. This is
impossible. ~\end{proof}

\begin{example}
The $1$-block intersection graph of any  Hanani triple system
{\rm(see~\cite{MR2246267}, page 67 for the definition)} is not
silver.
\end{example}
Note that by~Theorems~\ref{forallk},~\ref{RBIBDfork},~\ref{lowerbound},
and~\ref{APC},
for any admissible $v$ the problem of existence of a silver $1$-{\rm BIG($S(2,k,v)$)}
which possesses maximum possible independent set is settled.
\section{Zero block intersection graphs}
In this section we discuss $0$-block intersection graphs of
$S(2,k,v)$.\\

\noindent
\begin{notation}
\label{remk0} Let $x$ be a given element of  $S(2,k,v)$, and
denote  by $T(x)$ the set of $\frac{v-1}{k-1}$ blocks containing
$x$.
\end{notation}
It is trivial that $T(x)$  is an  independent set for $G_0$, thus $
\alpha(G_0) \geq \frac{v-1}{k-1}$.

\begin{lemma}
\label{lemmaG0}
Let $\cal{D}$ be an $S(2,k,v)$,
and $G_0 = 0$-${\rm BIG(\cal{D}})$. If $v >  k^3-2k^2+2k$
then  any maximum independent set of $G_0$ is of
the form $T(x)$, therefore   $\alpha(G_0) = \frac{v-1}{k-1}$.
\end{lemma}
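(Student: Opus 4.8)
The plan is to recast the statement in the language of intersecting families and then establish a Hilton--Milner type bound. Since $\mathcal{D}$ is a Steiner system, any two distinct blocks meet in $0$ or $1$ points, so a set of blocks is independent in $G_0$ precisely when the blocks pairwise intersect (in exactly one point). Thus an $\alpha$-set of $G_0$ is a maximum pairwise-intersecting family of blocks. The families $T(x)$ are exactly the pairwise-intersecting families through a common point, each of size $r := \frac{v-1}{k-1}$. So it suffices to prove that every pairwise-intersecting family $\mathcal{F}$ with $\bigcap_{B\in\mathcal{F}} B = \emptyset$ satisfies $|\mathcal{F}| < r$: then any $\alpha$-set must have a common point $x$, and being maximum while $T(x)$ is independent of size $r$, it must equal $T(x)$, giving $\alpha(G_0) = r$.

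To bound such a family $\mathcal{F}$ (which I may assume has at least two blocks), I would fix a block $B_0 = \{x_1,\dots,x_k\} \in \mathcal{F}$ and write $|\mathcal{F}| = 1 + \sum_{i=1}^{k} n_i$, where $n_i$ is the number of blocks of $\mathcal{F}\setminus\{B_0\}$ through $x_i$; since every block other than $B_0$ meets $B_0$ in exactly one point, this is a genuine partition. The core of the argument is to show $n_i \le k-1$ for each $i$. Picking any second block $A\in\mathcal{F}$ and letting $x_1$ denote the point $A\cap B_0$, for each $i\neq 1$ every block of $\mathcal{F}$ through $x_i$ must meet $A$ in a point of $A\setminus\{x_1\}$: it cannot meet $A$ at $x_1$, since it would then contain the two points $x_1,x_i$ of $B_0$ and hence be $B_0$ itself. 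Because a Steiner system has a unique block through any two points, sending each such block to its intersection with $A$ is injective into the $k-1$ points of $A\setminus\{x_1\}$, so $n_i \le k-1$.

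For the remaining case $i=1$ I would invoke the no-common-point hypothesis to choose a block $C\in\mathcal{F}$ with $x_1\notin C$. Every block of $\mathcal{F}$ through $x_1$ meets $C$, and two such blocks meeting $C$ in the same point would coincide (unique block through that point and $x_1$), so there are at most $|C|=k$ blocks of $\mathcal{F}$ through $x_1$ in total; as $B_0$ is one of them, $n_1\le k-1$ as well. Combining the two cases yields $|\mathcal{F}| \le 1 + k(k-1) = k^2-k+1$. A direct computation gives $(k-1)(k^2-k+1) = k^3-2k^2+2k-1$, so the hypothesis $v > k^3-2k^2+2k$ is exactly equivalent to $k^2-k+1 < \frac{v-1}{k-1} = r$; hence every common-point-free family is strictly smaller than $r$, as required.

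I expect the main obstacle to be the case $i=1$. It is tempting to control all the $n_i$ with a single auxiliary block, but the natural block $A$ only constrains the points $x_i$ distinct from where $A$ meets $B_0$; for $x_1$ itself the same reasoning only gives the useless bound $n_1\le r-1$ coming from a full star. The decisive observation is that the no-common-point hypothesis is precisely what furnishes a block $C$ avoiding $x_1$ — and this is the only place the hypothesis enters. Securing the clean estimate $n_1\le k-1$ is exactly what pushes the total below $r$ and produces the stated threshold on $v$.
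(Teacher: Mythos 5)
Your proof is correct, and it reaches the paper's conclusion by a genuinely different decomposition. The paper works directly with a maximum independent set $I$ that is not a star: it picks a point $x_0$ lying in at least two blocks of $I$, splits $I$ into $I_1$, the blocks through $x_0$ (at most $k$ of them, because any block of $I$ avoiding $x_0$ meets the blocks of $I_1$ in pairwise distinct points), and $I\setminus I_1$ (at most $(k-1)^2$ blocks, because each such block is determined, via $\lambda=1$, by the pair of points in which it meets two fixed blocks $B_1,B_2$ through $x_0$), giving $|I|\le k+(k-1)^2$. You instead fix a block $B_0$ and partition $\mathcal{F}\setminus\{B_0\}$ into the $k$ pencils through the points of $B_0$, bounding each pencil by $k-1$ via an injection into a transversal block: the second block $A$ handles the points $x_i\ne x_1=A\cap B_0$, and a block $C$ avoiding $x_1$ --- furnished precisely by the no-common-point hypothesis --- handles $x_1$. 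This is the classical Hilton--Milner-style argument specialized to linear spaces; the two computations yield the identical bound $k+(k-1)^2=1+k(k-1)=k^2-k+1$ and the identical threshold $v>k^3-2k^2+2k$, which you correctly verify is equivalent to $k^2-k+1<\frac{v-1}{k-1}$. What your route buys: it isolates a clean extremal statement about arbitrary common-point-free intersecting families (maximality enters only in the final deduction $I\subseteq T(x)\Rightarrow I=T(x)$), and it makes explicit the single place the no-common-point hypothesis is needed; by contrast, the paper's step $p\le k$ silently requires $I\setminus I_1\ne\emptyset$, which holds only because $I$ is maximum and not a star --- your formulation avoids that implicit appeal. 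What the paper's route buys is brevity: one doubly-covered point $x_0$ and one pair-count replace your case split at $i=1$. One microscopic gloss you should add: in the case $i\ne 1$, the injectivity of the map $B\mapsto B\cap A$ uses $x_i\notin A$ (so that the intersection point differs from $x_i$), which follows from $A\cap B_0=\{x_1\}$; similarly $z\ne x_1$ in the case $i=1$ because $x_1\notin C$. Neither is a gap, just a half-sentence worth stating.
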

\begin{proof}
Let $I$ be an $\alpha$-set of $G_0$. Suppose   $I$ is
not of the form $T(x)$. There exists an element $x_0$  of
$\cal{D}$ which appears in at least  two blocks of $I$. Let
$I_1=\{B_1, B_2, \ldots, B_p\}=\{B | \ B \in I\cap T(x_0)\}$, and
 $I\setminus I_1 =\{B_{p+1}, B_{p+2}, \ldots, B_{p+q}\}$. Since $ \lambda =1$,
\ for $1\leq i<j \leq p, $ \ $(B_i\setminus \{x_0\})\cap
(B_j\setminus \{x_0\}) = \emptyset$. Every  two blocks in $I$
have one  intersection. So, for each block  $B \in I\setminus
I_1$ \ we have $B\cap B_i = \{a_i\}$, $i= 1,2,\dots, p $. So
$p \leq |B|=k$.

Now suppose $B_1, B_2 \in I_1$. There exist exactly $(k-1)^2$
pairs  $\{x,y\}$ where $ x \in B_1\setminus\{x_0\}$ and $ y \in
B_2\setminus\{x_0\}$, and each of these pairs appears  at most in one of
the blocks of $I\setminus I_1$. Thus $ q \leq (k-1)^2$.

So $|I| = p+q \leq k +(k-1)^2$. But since $v > k^3-2k^2+2k$, for
each $x$ we have $|T(x)|= \frac{v-1}{k-1} > k +(k-1)^2 \geq |I|$.
Hence the statement follows.~\end{proof}

\begin{theorem}
\label{G0forallk}
Let $\cal{D}$ be an $S(2,k,v)$.
For $v > k^3-2k^2+2k $, $G_0 = 0$-${\rm BIG(\cal{D}})$ is not silver.
\end{theorem}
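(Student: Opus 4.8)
The plan is to combine the structural description of maximum independent sets from Lemma~\ref{lemmaG0} with a double-counting argument in the spirit of Theorems~\ref{forallk} and~\ref{APC}. Since $v > k^3 - 2k^2 + 2k$, Lemma~\ref{lemmaG0} guarantees that $\alpha(G_0) = \frac{v-1}{k-1}$ and that \emph{every} $\alpha$-set of $G_0$ has the form $T(x)$ for some point $x$; hence to prove $G_0$ is not silver it suffices to rule out a silver coloring with respect to an arbitrary $I = T(x)$. I would suppose such a silver coloring $c$ exists and derive a numerical contradiction. Recall that by Remark~\ref{srg1} the graph $G_0$ is regular of degree $d = b - k(r-1) - 1$, where $r = \frac{v-1}{k-1}$ and $b = \frac{v(v-1)}{k(k-1)}$, so $c$ uses exactly $d+1$ colors.

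The first key step is a uniform adjacency count: every block $B \notin I$ (equivalently $x \notin B$) is adjacent in $G_0$ to exactly $r - k$ blocks of $I$. Indeed, each of the $k$ points of $B$ determines, together with $x$, a unique block of $T(x)$, and these $k$ blocks are pairwise distinct, since two coinciding ones would force $B$ to meet a block of $T(x)$ in two points, contradicting $\lambda = 1$. Thus $B$ meets exactly $k$ of the $r$ blocks of $I$ and is disjoint from, hence adjacent in $G_0$ to, the remaining $r - k$. In the notation $X_i$ of the introduction this says $X_{r-k} = V(G_0) \setminus I$ while every other $X_i$ is empty.

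The second step is the double count. Because $|I| = r$ while $c$ uses $d+1 > r$ colors (the inequality $d+1 > r$ holds comfortably in the range $v > k^3 - 2k^2 + 2k$), there is a color $\iota$ not appearing on $I$. Since the independent set $I$ consists of rainbow vertices and $|N[B]| = d+1$ equals the number of colors, each color, in particular $\iota$, occurs exactly once in each $N[B]$; as $c(B) \neq \iota$, every $B \in I$ has exactly one $\iota$-colored neighbor. Counting the pairs $(B,u)$ with $B \in I$, $u \sim B$, and $c(u) = \iota$ from the $I$-side gives $r$, while counting from the $u$-side gives $(r-k)\,t$, where $t \geq 1$ is the number of $\iota$-colored vertices (all lying outside $I$ and each adjacent to exactly $r-k$ vertices of $I$ by the first step). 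Hence $r = (r-k)\,t$.

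Finally I would extract the contradiction from $r = (r-k)\,t$. Writing $r = (r-k) + k$, this forces $r-k$ to divide $k$, so in particular $r - k \leq k$, i.e. $v \leq 2k^2 - 2k + 1$. But one checks that $k^3 - 2k^2 + 2k = (k-1)^3 + (k^2 - k + 1) > 2k^2 - 2k + 1$ for $k \geq 3$, so the hypothesis $v > k^3 - 2k^2 + 2k$ yields $r - k > k > 0$, whence $t = r/(r-k) = 1 + k/(r-k)$ is not an integer, a contradiction. The only genuinely delicate point is pinning down the exact adjacency value $r-k$ in the first step (together with the auxiliary inequalities $d+1 > r$ and $r - k > k$ under the stated bound); once that uniform count is in hand, the divisibility obstruction is immediate and disposes of every $\alpha$-set simultaneously.
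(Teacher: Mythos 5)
Your proposal is correct and follows essentially the same route as the paper: invoke Lemma~\ref{lemmaG0} to reduce to $I = T(x)$, show every block outside $I$ is adjacent to exactly $\frac{v-1}{k-1}-k$ blocks of $I$, pick a color $\iota$ unused on $I$, and derive a divisibility contradiction by counting $\iota$-colored neighbors of $I$. The only (harmless) cosmetic difference is that you phrase the obstruction as $r=(r-k)t$ forcing $(r-k)\mid k$, whereas the paper writes it as $(v-k^2+k-1)\mid(v-1)$ --- these are equivalent, and your version of the final inequality check under $v > k^3-2k^2+2k$ is also valid.
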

\begin{proof}
$G_0$ is a $\frac{v^2+k^3-v(k^2+1)-k^2+k}{k(k-1)}$--regular graph
~(Remark~\ref{srg1}). Let $I$ be any $\alpha$-set for
$G_0$. By Lemma~\ref{lemmaG0}, $I = T(x)$ and $|I|= \alpha(G_0) =
\frac{v-1}{k-1}$. Since each block out of  $I$ intersects exactly
$k$  blocks of $I$, each vertex
 of $V(G_0) \setminus I$ is adjacent to $\frac{v-1}{k-1}-k =
\frac{v-1-k^2+k}{k-1}$  vertices of $I$. Then $ V(G_0)= I \cup
X_{\frac{v-1-k^2+k}{k-1}}$ and $|X_{\frac{v-1-k^2+k}{k-1}}|=
\frac{(v-1)(v-k)}{k(k-1)}.$

To the contrary, $G_0$ has a silver coloring  with respect to
$I$. Let $C$ be the set of colors, $|C|=
\frac{v^2-v-k^2v+k^3}{k(k-1)}$. Since $|C| > \frac{v-1}{k-1}$, a
color like $\iota$  exists that is not used in the coloring of
$I$. The vertices of $I$ are rainbow, and  the vertices from
$X_{\frac{v-1-k^2+k}{k-1}}$ that have color $\iota$,
 each must be adjacent to $\frac{v-1-k^2+k}{k-1}$
different vertices of $I$. Therefore $|I|$  must be divisible by
$\frac{v-1-k^2+k}{k-1}$, then $(v-k^2+k-1) \mid (v-1) $ which is
impossible, since $v > k^3-2k^2+2k$. Therefore graph $G_0$ is not
silver with respect to any $\alpha$-set.
\end{proof}
%
\subsection{$0$-{\rm BIG} for  Steiner triple systems}
Both $0$-${\rm BIG (STS}(v))$   for $v=7$ and $v=9$, by
Example~\ref{symmetric-AG},  are totally silver.
\begin{theorem}
For any admissible $v>9$, $G_0 = 0$-{\rm BIG(STS$(v)$)} is not
silver.
\end{theorem}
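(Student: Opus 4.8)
The plan is to exploit the strongly regular structure of $G_0$ recorded in Remark~\ref{srg1} together with a counting argument on rainbow closed neighborhoods, which reduces the whole statement to a single residual design. Write $r=\frac{v-1}{2}$ for the replication number and $b=\frac{v(v-1)}{6}$ for the number of blocks. From Remark~\ref{srg1} one reads off the spectrum of $G_0$: besides the degree $\rho_0=b-1-3(r-1)$, the restricted eigenvalues are $2$ and $3-r$, so the least eigenvalue is $\tau=3-r$. The Hoffman (ratio) bound then gives $\alpha(G_0)\le \frac{-\tau\, b}{\rho_0-\tau}=r$, and since every pencil $T(x)$ (Notation~\ref{remk0}) is an independent set of size $r$, we obtain $\alpha(G_0)=r$. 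The crucial extra input is the equality case of the ratio bound: because $r$ attains the bound, \emph{every} maximum independent set $I$ is equitable, i.e.\ each block outside $I$ is disjoint from (hence $G_0$-adjacent to) exactly $-\tau=r-3=\frac{v-7}{2}$ blocks of $I$. This substitutes for Lemma~\ref{lemmaG0}, which is only available when $v>15$.

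Next I would run the rainbow counting. Suppose $G_0$ is silver with respect to a maximum independent set $I$, using $\rho_0+1$ colors. Since $\rho_0+1>r=|I|$, some color $\iota$ is missing on $I$. Each $u\in I$ is rainbow and has $|N[u]|=\rho_0+1$ equal to the number of colors, so every color — in particular $\iota$ — occurs exactly once in $N(u)$. The $\iota$-colored blocks all lie outside $I$, are pairwise non-adjacent, and by equitability are each disjoint from exactly $\frac{v-7}{2}$ blocks of $I$; since each vertex of $I$ has exactly one $\iota$-neighbor, these blocks partition $I$ into parts of size $\frac{v-7}{2}$. Hence $\frac{v-7}{2}\mid r=\frac{v-1}{2}$, i.e.\ $(v-7)\mid 6$. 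Among admissible $v>9$ this forces $v=13$. Thus for every admissible $v\ge 15$ the graph $G_0$ is not silver; this reproves Theorem~\ref{G0forallk} for Steiner triple systems and additionally covers $v=15$, where the spectral equitability is exactly what compensates for the failure of Lemma~\ref{lemmaG0}.

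The only surviving value is $v=13$, where $\frac{v-7}{2}=3$ divides $r=6$ and the divisibility is inconclusive; here I would argue geometrically. An absent color $\iota$ now splits $I$ into two complementary triples, realized by exactly two outside blocks $w,w'$ whose neighborhoods $N(w)\cap I$ and $N(w')\cap I$ partition $I$; as $w,w'$ carry the same color they must be $G_0$-non-adjacent, i.e.\ they must intersect. If $I=T(x)$ is a pencil, its six blocks partition $V\setminus\{x\}$ into six pairs $P_1,\dots,P_6$, and complementary neighborhoods force the three points of $w$ to lie in one triple of these pairs and the three points of $w'$ in the complementary triple; these point sets are disjoint, so $w\cap w'=\emptyset$, i.e.\ $w$ and $w'$ are $G_0$-adjacent — a contradiction. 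Hence no pencil can serve as the $\alpha$-set of a silver coloring when $v=13$.

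The step I expect to be the main obstacle is this last one: for $v=13$ the ratio bound only guarantees $|I|=6$, and a pairwise-intersecting family of six blocks need not be a pencil — it could be a ``Fano-minus-a-line'' configuration on seven points (degree sequence $3,3,3,3,2,2,2$), for which the disjointness argument above breaks down. To finish I would rule such configurations out as $\alpha$-sets of silver colorings, either by analyzing the $2$-$(6,3,4)$ ``link'' design formed on $I$ by the triples $N(w)\cap I$ and showing that it admits no pairing into intersecting complementary blocks, or — since there are exactly two nonisomorphic STS$(13)$ — by a direct finite verification over both designs. Combined with the previous paragraphs, this settles all admissible $v>9$.
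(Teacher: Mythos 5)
Your route is genuinely different from the paper's, and apart from the one gap you flag yourself it is sound. The paper obtains $\alpha(G_0)=\frac{v-1}{2}$ and the pencil structure combinatorially: Lemma~\ref{lemmaG0} (valid only for $v>k^3-2k^2+2k$, i.e.\ $v>15$ here) feeds Theorem~\ref{G0forallk} for $v>15$, while $v=15$ is handled by classifying the possible non-pencil $\alpha$-sets (six or seven blocks of a Fano configuration) and checking over the $80$ systems that every $\alpha$-set is equitable with each outside block adjacent to exactly $4$ blocks of $I$, after which the same divisibility obstruction ($4\nmid 7$) is applied. Your spectral argument replaces all of this uniformly: the restricted eigenvalues $2$ and $3-r$ are correct (the discriminant is $(r-1)^2$), the ratio bound does evaluate to $r$, and the equality case gives equitability with quotient $-\tau=\frac{v-7}{2}$ for \emph{every} maximum independent set, pencil or not. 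Your partition argument (each $u\in I$ sees the missing color $\iota$ exactly once, so the $I$-neighborhoods of the $\iota$-blocks partition $I$) then yields $\frac{v-7}{2}\mid\frac{v-1}{2}$, killing all admissible $v\ge 15$ at once — in particular it absorbs the paper's $v=15$ case with no census of the $80$ STS$(15)$s, which is what the equality-case equitability buys you. The $v=13$ endgames also differ: the paper finds three blocks of $T(x)$ with no common neighbor (a somewhat laborious case analysis over the cyclic and non-cyclic systems) and counts $|N(B_1)\cap N(B_2)|+|N(B_2)\cap N(B_3)|+|N(B_1)\cap N(B_3)|=12$ vertices needing distinct colors in ${\rm SRG}(26,10,3,4)$ against only $11$ colors; your argument — the two $\iota$-blocks carve $I$ into complementary triples, so their point sets lie in disjoint halves of the six pairs $B_i\setminus\{x\}$, forcing them disjoint and hence adjacent with the same color — is shorter and cleaner, conditional on $I$ being a pencil.

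The genuine gap is exactly the one you name: at $v=13$ the ratio bound gives only $|I|=6$, not the pencil structure, and your disjointness argument does not survive a Fano-type $\alpha$-set. This gap is real but closable, and the paper closes it in two steps that match your proposed fallback. First, since $\lambda=1$ forces distinct blocks to meet in at most one point, the paper's structure analysis shows that a non-pencil pairwise-intersecting family of six blocks must consist of six blocks of an STS$(7)$ configuration (your ``Fano minus a line'' with degree sequence $3,3,3,3,2,2,2$ is precisely this). Second, it invokes the checkable fact that neither of the two STS$(13)$s contains even six blocks of an STS$(7)$, so pencils are the only $\alpha$-sets. Your suggestion of a direct finite verification over the two nonisomorphic STS$(13)$s is therefore exactly the needed ingredient and is in effect how the paper proceeds; the alternative you sketch via the link design on $I$ is unnecessary once this fact is available. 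With that fact supplied, your proof is complete and, for $v\ge 15$, strictly more uniform than the paper's.
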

\begin{proof}
For  $ v>15$, it follows by Theorem~\ref{G0forallk}.\\
If $ v \leq 15$, then suppose $I$ is an $\alpha$-set of
$G_0$, and  $I$ is not of the form $T(x)$. Then it is easy to
check that, each element of ${\rm STS}(v)$ appears at most in $3$
blocks of $I$. If it has $3$ blocks containing an element $x$,
then such a  set has at most $7$ blocks, and they are contained
in $I_1$, where:
$$I_1=\{ \{x,a,b\},\{x,c,d\},\{x,e,f\},\{a,c,f\}, \{a,d,e\},\{b,c,e\},\{b,d,f\}\}
\approx {\rm STS}(7).$$
Now we discuss possible cases.
\\

\noindent
{\bf\Large $v=15$:}\\
For  $v=15$ an $\alpha$-set, $I$, may be of   the form
$T(x)$ or it may come from a subsystem ${\rm STS}(7)$, in either
case $\alpha(G_0)= 7$. From $80$ non-isomorphic  ${\rm STS}(15)$s,
$23$ of them have a subsystem ${\rm STS}(7)$
{\rm(\cite{MR2246267}}, page 32). It is straightforward to check
that in all of ${\rm STS}(15)$s for any $\alpha$-set
$I$, each block out of $I$ has intersection with exactly three
blocks of $I$. So each vertex in $V(G_0)\setminus I$ is adjacent
to exactly four vertices of $I$. In any silver coloring with $C$
as the set of colors of $G_0$, we have $|C|=17 >7= |I|$. So there
exists a color $\iota$ which  is not used in $I$. Every vertex
with the color $\iota$ has exactly $4$ neighbors in $I$,
therefore $7$ must be a multiple of $4$. So $G_0$ does not have a
silver coloring.
\\

\noindent
{\bf \Large $v=13$:}\\
For $v=13$ there are two non-isomorphic ${\rm STS}(13)$s. No ${\rm
STS}(13)$  has a subsystem of ${\rm STS}(7)$, even no ${\rm
STS}(13)$ has $6$ blocks of an ${\rm STS}(7)$. So, in $G_0$ for
both of them, the sets  of the form $T(x)$, are the only $\alpha$-sets and $\alpha(G_0)=6$.
Suppose $I$ is any $\alpha$-set. \\
First, we show that it is always possible to find three vertices
in $I$ with no common neighbor:
\begin{itemize}
\item
One of  two  ${\rm STS}(13)$s, Type $1$, has a cyclic
automorphism, and we can construct its  blocks on $\{1,2,\dots,13\}$ by the following
 base blocks:
$$\{1,2,5\}, \ \ \ \{1,3,8\} \ \quad {\rm mod} \  13.$$
If  $I= T(1)$, then $B_1= \{1,2,5\}$, $B_2=\{1,3,8\},$ and  $B_3=
\{1,10,11\}$ do not have common neighbor. Let $x \neq 1$ be a
given element of  ${\rm STS}(v)$, and $I= T(x)$. Three vertices
of $I$, $B'_1,B'_2,B'_3$ are obtained by adding $(x-1)$ to all
members of blocks $B_1,B_2,B_3$, do not have common neighbor.
\item
The other ${\rm STS}(13)$ is non-cyclic and we can construct its
blocks from Type $1$ by  replacing   four blocks of trade $T_1$
with  four blocks of  trade $T_2$ as follows:
$$T_1: \ \ \begin{array}{ccc}
1&2&5\\
1&3&8 \\
10&2&8 \\
10&3&5 \\
\end{array} \hspace{2cm}
T_2: \ \  \begin{array}{ccc}
1&2&8\\
1&3&5 \\
10&2&5 \\
10&3&8 \\
\end{array}$$

Let  $I=T(x)$ for some $x$. If  $x$ is an element of  $T_2$, i.e.
$ x \in \{1,2,3,5,8,10\}$, then there are two blocks say $B_1$ and
$B_2$ of  $T_2$ which contain $x$. There exists one element $y$,
such that $ y \in T_2$ but  $y \notin B_1\cup B_2$. We consider
$B_3$, the block containing $x$ and $y$. Then these three blocks
do not have common neighbor. If $x$ is not in $T_2$, then we
consider several cases for $I=T(x)$, and show that there exist
three vertices of $I$, which do not have common neighbor.
\end{itemize}
Now, assume for some STS$(13)$, \ $G_0 = 0$-{\rm BIG(STS$(13)$)}
is silver with respect to some $\alpha$-set
$I=T(x)=\{B_1,B_2,B_3,B_4,B_5,B_6 \}$. The color of all neighbors
of $B_i$, $i=1,\dots,6$, must be distinct. Assume
$\{B_1,B_2,B_3\}\subset I$ do not have common neighbor. Let
$N(B_i)$ be the set of neighbors of $B_i$. $G_0=SRG(26,10,3,4)$,
so
 $ |N(B_1) \cap N(B_2)| + |N(B_2) \cap N(B_3)| + |N(B_1) \cap N(B_3)| = 12.$
Thus the color of these vertices must be distinct, while we have
only $11$ colors. Therefore $G_0$ does not have a silver
coloring.~\end{proof}
\def\cprime{$'$} \def\cprime{$'$} \def\cprime{$'$}

\end{document}